\font\cyr=wncyr10
\newcommand{\A}{{\mathbb A}}
\newcommand{\C}{{\mathbb C}}
\newcommand{\F}{{\mathbb F}}
\newcommand{\Q}{{\mathbb Q}}
\newcommand{\Qp}{{\Q_p}}
\newcommand{\bQ}{{\overline\Q}}
\newcommand{\R}{{\mathbb R}}
\newcommand{\T}{{\mathbb T}}
\newcommand{\Z}{{\mathbb Z}}
\newcommand{\Zp}{{\Z_p}}
\newcommand{\grl}{{\mathfrak l}}
\newcommand{\grn}{{\mathfrak n}}
\newcommand{\grp}{{\mathfrak p}}
\newcommand{\eps}{{\epsilon}}
\newcommand{\veps}{{\varepsilon}}
\newcommand{\CK}{{\mathcal K}}
\newcommand{\CL}{{\mathcal L}}
\renewcommand{\O}{{\mathcal O}}
\newcommand{\alg}{\mathrm{alg}}
\newcommand{\Aut}{\mathrm{Aut}}
\newcommand{\Div}{\mathrm{Div}}
\newcommand{\End}{\mathrm{End}}
\newcommand{\Gal}{\mathrm{Gal}}
\newcommand{\GL}{\mathrm{GL}}
\newcommand{\Hom}{\mathrm{Hom}}
\newcommand{\im}{\mathrm{im}}
\newcommand{\Ind}{\mathrm{Ind}}
\newcommand{\isoarrow}{\stackrel{\sim}{\rightarrow}}
\newcommand{\Nm}{\mathrm{Nm}}
\newcommand{\ord}{\mathrm{ord}}
\newcommand{\rank}{\mathrm{rank}}
\newcommand{\res}{\mathrm{res}}
\newcommand{\Res}{\mathrm{Res}}
\newcommand{\rec}{\mathrm{rec}}
\newcommand{\Sel}{\mathrm{Sel}}
\newcommand{\Sha}{\hbox{\cyr X}}
\newtheorem{coro}[subsubsection]{\bf Corollary}
\newtheorem{lem}[subsubsection]{\bf Lemma}
\newtheorem{prop}[subsubsection]{\bf Proposition}
\newtheorem{conj}[subsubsection]{\bf Conjecture}
\newenvironment{customthm}[1]
  {\innercustomthm}
  {\endinnercustomthm}
\theoremstyle{definition}
\newtheorem{rmk}[subsubsection]{\it Remark}
\author{Christopher Skinner}
\address{
 Department of Mathematics\\
 Princeton University\\
  Fine Hall, Washington Road \\
 Princeton, NJ 08544-1000\\
 USA}
\title[A converse to a theorem of Gross, Zagier, and Kolyvagin]{A converse to a theorem of Gross, Zagier, \\ and Kolyvagin}
\begin{document}
\setcounter{section}{0}
\begin{abstract}
Let $E$ be a semistable elliptic curve over $\Q$. We prove that if $E$ has non-split multiplicative reduction at at least one odd prime or split multiplicative reduction at at least two odd primes, then
$$
\rank_\Z E(\Q) =1 \ \text{and} \ \#\Sha(E)<\infty \implies \ord_{s=1}L(E,s)=1.
$$
We also prove the corresponding result for the abelian variety associated with a weight two newform $f$ of 
trivial character. These, and other related results, are consequences of our main theorem, which establishes
criteria for $f$ and $H^1_f(\Q,V)$, where $V$ is the $p$-adic Galois representation associated with $f$, 
that ensure that $\ord_{s=1}L(f,s)=1$. The main theorem is proved using the Iwasawa theory of $V$ 
over an imaginary quadratic field to show that the $p$-adic logarithm of a suitable Heegner point is non-zero.
\end{abstract}
\maketitle

\section{Introduction}

Let $f\in S_2(\Gamma_0(N))$ be a newform with trivial nebentypus.
Associated with $f$ is an abelian variety $A_f$ over $\Q$ (really an
isogeny class of abelian varieties) characterized by an equality
of the Hasse-Weil $L$-function $L(A_f,s)$ of $A_f$ and the product
of the $L$-functions $L(f^\sigma,s)$ of all the Galois conjugates
$f^\sigma$ of $f$:
$$
L(A_f,s)=\prod L(f^\sigma,s).
$$
The endomorphism ring $\End^0_\Q (A_f)$ is a totally real field
$M_f$ of degree equal to the dimension of $A_f$; this is naturally identified
with the subfield of $\C$
generated over $\Q$ by the Hecke eigenvalues of $f$ (equivalently, the Fourier
coefficients of the $q$-expansion of $f$ at $\infty$), and so its degree
$[M_f:\Q]$ is equal to the number of conjugate forms $f^\sigma$.

The celebrated Birch--Swinnerton-Dyer conjecture, as formulated for
abelian varieties, 
predicts that the rank of $A_f(\Q)$ equals the order of vanishing at $s=1$ of the $L$-function $L(A_f,s)$:
\begin{equation*}
\rank_\Z A_f(\Q) \stackrel{?}{=} \ord_{s=1} L(A_f,s).
\end{equation*}
The most spectacular result to date in the direction of this conjecture follows
from the combination of the work of Gross and Zagier \cite{GZ} and Kolyvagin \cite{Koly}
\cite{Koly-Log},
which together prove:
Let $r=0$ or $1$. Then
$\ord_{s=1}L(f,s)=r$ if and only if $\ord_{s=1}L(A_f,s)= [M_f:\Q]r$, and
\begin{equation*}
 \ord_{s=1} L(f,s)=r \implies \begin{matrix} \rank_\Z A_f(\Q)=[M_f:\Q]r \ \text{and} \
\#\Sha(A_f)< \infty,
\end{matrix}
\end{equation*}
where $\Sha(A_f)$ is the Tate--Shafarevich group of $A_f$ (conjecturally always finite).
For $r=0$ the converse to this implication was established in \cite{SU-MCGL} (for $N$ not squarefull) and
in \cite{Xin-thesis} (for all $N$). In the proofs of these converses it is only needed that the $p$-primary part
$\Sha(A_f)[p^\infty]$ be finite for a sufficiently large prime $p$ such that $f$ is
ordinary with respect to some prime $\lambda\mid p$ of $M_f$.
In particular, if $A_f$ is a semistable elliptic curve, then it suffices to take $p\geq 11$
of good, ordinary reduction.

For $r=1$ and $A_f$ an elliptic curve having complex multiplication, a converse to the above implication
is a consequence of the combination of results of Rubin, Bertrand, and Perrin-Riou; this is explained
in Theorems 8.1 and 8.2 of \cite{Ru-CMrational}.
In this paper we prove a converse for the $r=1$ case for those $A_f$ with $N$ squarefree. If $N$ is squarefree, then
$A_f$ does not have complex multiplication, so the cases covered by the theorems in this paper are disjoint from those
covered by the results recalled in \cite{Ru-CMrational}.

Let $\pi=\otimes\pi_v$ be the cuspidal automorphic representation of $\GL_2(\A)$ such that $L(\pi,s-1/2)=L(f,s)$ 
(so $N$ is the conductor of $\pi$).

\begin{customthm}{A}\label{thmA} Suppose $N$ is squarefree.
If there is at least one odd prime $\ell$ such that $\pi_\ell$ is the twist of the special
representation by the unique unramified quadratic character or at least two odd primes
$\ell_1$ and $\ell_2$ such that $\pi_{\ell_1}$ and $\pi_{\ell_2}$ are special,
then
\begin{equation*}
\begin{matrix} \rank_\Z A_f(\Q)=[M_f:\Q] \ \text{and} \ \#\Sha(A_f)< \infty \implies \ord_{s=1} L(f,s) = 1.
\end{matrix}
\end{equation*}
\end{customthm}

\noindent The hypotheses on the local representations can also be formulated in terms of the
eigenvalues of the Atkin-Lehner involutions $w_\ell$ acting on $f$: either there is at least one odd prime $\ell$
such that the eigenvalue of $w_\ell$ is $+1$ or there are at least two odd primes $\ell_1$ and $\ell_2$
such that the eigenvalues of $w_{\ell_1}$ and $w_{\ell_2}$ are both $-1$.

Since every elliptic curve over $\Q$ is modular, Theorem A can be restated in this case to read:
\begin{customthm}{A$'$}\label{thmA'}
Suppose $E$ is a semistable elliptic curve over $\Q$. If
there is at least one odd prime at which $E$ has nonsplit multiplicative reduction or at least
two odd primes at which $E$ has split multiplicative reduction,
then
\begin{equation*}
\begin{matrix} \rank_\Z E(\Q)=1 \ \text{and} \ \#\Sha(E)< \infty \implies \ord_{s=1} L(E,s) = 1.
\end{matrix}
\end{equation*}
\end{customthm}

As the hypotheses on $E(\Q)$ and $\Sha(E)$ ensure that
the root number of $E$ is $-1$ (by, for instance, Nekov\'a\v{r}'s results toward the parity conjecture),
the hypotheses on the reduction types in Theorem A$'$
only exclude those semistable curves that have conductor equal to $2\ell$ with $\ell$ an odd prime and
that have
split reduction at both $2$ and $\ell$. However, showing that a
positive proportion of semistable elliptic curves $E$, when ordered by height for example, 
satisfy the hypotheses of Theorem \ref{thmA'} with $\rank_\Z E(\Q)=1$ and $\#\Sha(E)<\infty$ remains an open
and interesting problem.

Our proof of Theorem A is similar in spirit to those of Theorems 8.1 and 8.2 of \cite{Ru-CMrational}
insofar as it is essentially $p$-adic.
We deduce Theorem A from Theorem B below, which gives a $p$-adic criterion for
$A_f$ to have
both algebraic and analytic rank $[M_f:\Q]$ over an imaginary quadratic field.
However, unlike the proofs in \cite{Ru-CMrational}, our proof of this criterion does 
not make use of a $p$-adic Gross--Zagier formula for the derivative of
a $p$-adic $L$-function or require non-degeneracy
of $p$-adic heights. Instead it uses a formula expressing the value of a $p$-adic
$L$-function in terms of the formal log of a rational point on $A_f$. 

Let $p$ be a prime and $\lambda\mid p$ a prime of $M_f$. Let $L= M_{f,\lambda}$. 
Let $\rho_{f,\lambda}:\Gal(\bQ/\Q)\rightarrow \Aut_L(V)$ be the usual two-dimensional
Galois representation associated with $f$ and let $\bar\rho_{f,\lambda}$ be
its residual representation (the semisimplification of the reduction of a Galois
stable lattice in $V$).

\begin{customthm}{B}\label{thmB}
Suppose $N$ is squarefree and $p\geq 5$.
Let $K$ be an imaginary quadratic field with odd discriminant $D$.
Suppose \begin{itemize}
\item[(a)] $p\nmid N$ and $f$ is ordinary with respect to $\lambda$;
\item[(b)] $\bar\rho_{f,\lambda}$ is irreducible and
ramified at an odd prime that is either inert or ramified in $K$;
\item[(c)] both $2$ and $p$ split in $K$;
\item[(d)] if $(D,N)\neq 1$ then for each $\ell\mid (D,N)$, 
$\pi_\ell$ is the twist of the special representation by the unique unramified quadratic character,
and each prime divisor of  $N/(D,N)$ splits in $K$;
\item[(e)] $\dim_L H^1_f(K,V)=1$ and the restriction
$H^1_f(K,V)\stackrel{\res}{\rightarrow}\prod_{v\mid p}H^1_f(K_v,V)$ is an injection.
\end{itemize} Then $\ord_{s=1}L(f,K,s)=1$, $\rank_\Z A_f(K)=[M_f:\Q]=\ord_{s=1}L(A_f/K,s)$, and $\Sha(A_f/K)$ is finite.
\end{customthm}

\noindent Here
$H^1_f(K,V)$ and $H^1_f(K_v,V)$ are, respectively, the global and local Bloch--Kato
Selmer groups. Also, $L(A_f/K,s)$ and $\Sha(A_f/K)$ are the $L$-function and the
Tate--Shafarevich group of $A_f/K$, and $L(f,K,s)=L(f,s)L(f\otimes\chi_D,s)$ with
$\chi_D$ the quadratic character associated with $K$.

The deduction of Theorem A from Theorem B, which is explained in more detail in Section \ref{AimpliesB}, goes as follows: 
As $f$ is not a CM form (since $N$ is squarefree), condition (a) holds for
some $\lambda\mid p$ for a set of primes $p$ of density one, while $\bar\rho_{f,\lambda}$
is irreducible if
$p$ is sufficiently large,
in which case it follows from Ribet's work on level-lowering that
$\bar\rho_{f,\lambda}$ is ramified at some odd prime $q\neq p$ and even, for $p$ sufficiently large,
ramified at all primes $q\mid \mid N$. Fix such a $p$ and $\lambda$.
From the hypotheses on $\pi_\ell$ in Theorem A it
then follows that an imaginary quadratic field $K$ can be chosen so that
its discriminant $D$ is odd; (b),
(c), and (d) hold;
and $L(A_f^D,1)\neq 0$,
where $A_f^D$ is the $K$-twist of $A_f$.
As $L(A_f^D,s) = \prod L(f^\sigma\otimes\chi_D,s)$,
the existence of a $K$ with the desired properties
follows from \cite{Friedberg-Hoffstein}.
From the non-vanishing of $L(A_f^D,1)$ it follows that
$A_f^D(\Q)$ and $\Sha(A_f^D)$ are finite. Together with $A_f(\Q)$ having rank $[M_f:\Q]$ and $\Sha(A_f)$ being
finite, this implies that $A_f(K)$ has rank $[M_f:\Q]$ and $\Sha(A_f/K)$ is finite, which in turn imply (e).
It then follows from Theorem B that $\ord_{s=1}L(A_f/K,s) = [M_f:\Q]$
and $\ord_{s=1}L(f,K,s)=1$. As $L(A_f/K,s) = L(A_f,s)L(A_f^D,s)$
and $L(f,K,s)=L(f,s)L(f\otimes\chi_D,s)$, it follows that $\ord_{s=1}L(A_f,s) = [M_f:\Q]$
and $\ord_{s=1}L(f,s)=1$.

As the deduction of Theorem A from Theorem B shows, the hypothesis that
$\Sha(A_f)$ is finite can be replaced
with $\Sha(A_f)[p^\infty]$ finite for some suitable prime $p$. It is even possible to formulate
conditions on the $\lambda$-Selmer group of $A_f/K$ that ensure that hypothesis (e) of 
Theorem \ref{thmB} holds, from which one can deduce, for example:

\begin{customthm}{C}\label{thmC} Let $E$ be a semistable curve over $\Q$ such that 
there is at least one odd prime at which $E$ has nonsplit multiplicative reduction or at least
two odd primes at which $E$ has split multiplicative reduction. Suppose there is 
a prime $p\geq 5$ at which $E$ has 
good, ordinary reduction and such that 
\begin{itemize}
\item[(a)] $E[p]$ is an irreducible $\Gal(\bQ/\Q)$-representation;
\item[(b)] $\Sel_p(E)\cong \Z/p\Z$;
\item[(c)] the image of the restriction map $\Sel_p(E)\rightarrow E(\Qp)/pE(\Qp)$ does not
lie in the image of $E(\Qp)[p]$.
\end{itemize}
Then $\ord_{s=1}L(E,s)=1= \rank_\Z E(\Q)$ and $\#\Sha(E)<\infty$.
\end{customthm}
\noindent It is through similar variations that Theorem \ref{thmB} plays a crucial role in a recent proof that:

\begin{customthm}{D}[Bhargava-Skinner \cite{Bh-Sk-posprop}] A positive proportion of
elliptic curves, when ordered by height, have both algebraic and analytic rank one.
\end{customthm}

To explain how to pass from the hypotheses of Theorem B to its conclusion, we begin by recalling the
Gross--Zagier formula.  It follows from hypothesis (e) and the parity
conjecture for Selmer groups of $p$-ordinary modular forms \cite{Ne-parity}
that the sign of the functional equation of $L(f,K,s)$
is $-1$.  It then follows from the general Gross--Zagier formula of X.~Yuan, S.~Zhang, and
W.~Zhang \cite{YZZ} that $A_f$ is a quotient of the Jacobian of some Shimura curve
(possibly a modular curve)
such that the N\'eron-Tate height of the image $P_K(f)\in A_f(K)\otimes M_f$ of
a certain $0$-cycle on the curve (essentially a Heegner cycle) is related to $L'(f,K,1)$ by
\begin{equation*}
\langle P_K(f),P_K(f)\rangle \stackrel{.}{=} L'(f,K,1),
\end{equation*}
where $\langle \cdot,\cdot\rangle$ is the N\'eron-Tate height-pairing (relative to some
symmetric ample line bundle) and `$\stackrel{.}{=}$' denotes equality up to 
a non-zero constant (which depends on $f$ and the line bundle). 
So if we expect to prove that
$\ord_{s=1}L(f,K,1)=1$ then we should expect to prove that the height of 
$P_K(f)$ is non-zero or, equivalently, that $P_K(f)\neq 0$.
If $P_K(f)\neq 0$, then (e) together with the general Gross--Zagier formula
and the work of Kolyvagin implies the conclusions of Theorem B. We establish Theorem B by
proving that $P_K(f)\neq 0$.

To show that $P_K(f)\neq 0$ we do not directly show that its height is non-zero.
Instead we show that its formal logarithm at a prime of $K$ above $p$ does not
vanish, which is sufficient for our purposes.
To do this we make use of $p$-adic analogs of the Gross--Zagier formula, proved by
Bertolini, Darmon, and Prasanna and Brooks, which are analogs of a formula proved
by Rubin \cite{Ru-CMrational} in the CM case. Recall that $p=\grp\bar \grp$ splits in $K$ and
that $D$ is odd. As explained in
\cite{BDP} and \cite{Hunter} there is a $p$-adic $L$-function $L^S_\grp(f,\chi)$,
a function of certain $p$-adic anti-cyclotomic
Hecke characters $\chi$ of $K$, such that
\begin{equation*}
L^S_\grp(f,1) \stackrel{.}{=} (\log_\omega P_K(f))^2,
\end{equation*}
where $\log_\omega:A_f(K_\grp)\otimes L\rightarrow \overline K_\grp$ is the formal logarithm,
determined
by a certain 1-form $\omega\in \Omega^1(A_f)\otimes M_f$, and 
`$\stackrel{.}{=}$' again denotes equality up to 
a non-zero constant.
Our aim then is to show that $L_\grp^S(f,1)\neq 0$ under the
hypotheses of Theorem B. Our method for doing so is via Iwasawa theory.

Iwasawa theory conjecturally relates the $p$-adic $L$-function $L^S_\grp(f,\chi)$ to the characteristic
ideal of a certain $p$-adic Selmer group. One consequence of such a relation would be
the implication
\begin{equation*}
L^S_\grp(f,1) =0 \implies H^1_\grp(K,V)\neq 0,
\end{equation*}
where $H^1_\grp(K,V) \subset H^1(K,V)$ is the subspace of classes that vanish in
$H^1(K_w,V)$ for all places $w\neq \bar\grp$.  However, hypothesis (e) of
Theorem B ensures
that $H^1_\grp(K,V)=0$, so it would follow from this implication that $L^S_\grp(f,1)\neq 0$
and hence that $P_K(f)$ is non-torsion.  Our strategy for proving Theorem B ultimately reduces to
the above implication. The desired result from Iwasawa theory is part of recent work of
Wan \cite{Wan-U31}, following the methods of
\cite{SU-MCGL}, under certain hypotheses on $f$, $p$, and $K$.
The conditions (a)-(d) of Theorem B ensure that these hypotheses hold.

Following the proof of Theorem B, we include remarks emphasizing where in the arguments the
various hypotheses intervene, with an eye toward future developments that should remove many
of them. We then elaborate on the deduction of Theorem \ref{thmA} from Theorem \ref{thmB}
and explain how similar arguments can be applied to the $r=0$ case, giving an alternate
proof of a special case of the results in \cite{SU-MCGL} and \cite{Xin-thesis}.

After the first version of this paper was completed, Wei Zhang released
a preprint \cite{Z-Koly} in which he proves many cases of a conjecture of Kolyvagin, showing that the $p$-adic 
Selmer group of $A_f$ is often spanned by classes derived from Heegner points. As a consequence Zhang 
obtains a theorem similar to Theorem \ref{thmB}. This theorem does not require the restriction map at primes
above $p$ be injective (the second half of hypothesis (e) of Theorem \ref{thmB}) but crucially requires that the Tamagawa factors
at the primes that split in $K$ or are congruent to $\pm 1$ modulo $p$ be indivisible by $p$. Theorem \ref{thmB}
imposes no hypotheses on Tamagawa factors. While there is substantial overlap in the cases covered by Theorem \ref{thmB}
and the results of \cite{Z-Koly}, neither subsumes the other. The proof of the main result in \cite{Z-Koly} 
also relies on Iwasawa theory, in this case on consequences of the Main Conjecture for $\GL_2$ proved in \cite{SU-MCGL}.

\noindent{\em Acknowledgements.}
The author thanks Kartik Prasanna and Xin Wan for helpful conversations.
This work was partially supported by grants
from the National Science Foundation, including DMS-0701231 and DMS-0758379.
The first version of this paper was written while the author was a visitor in the School of Mathematics
at the Institute for Advanced Study.

\section{The proof of Theorem \ref{thmB}}

Let $\bQ$ be an algebraic closure of $\Q$ and $K/\Q$ an imaginary quadratic field in $\bQ$.
Fix an embedding
$\bQ\hookrightarrow\C$. This determines a complex conjugation $c\in G_\Q=\Gal(\bQ/\Q)$, which induces the non-trivial 
automorphism on $K$.
For each prime $\grl$ of $K$ let $\overline K_\grl$ be an algebraic closure of $K_\grl$ and fix an
embedding $\bQ\hookrightarrow \overline K_\grl$; the latter
realizes $G_{K_\grl}=\Gal(\overline K_\grl/K_\grl)$ as a decomposition subgroup for $\grl$ in
$G_K=\Gal(\bQ/K)$. Let $I_\grl\subset G_{K_\grl}$ be the inertia subgroup.
Let $\F_\grl$ be the residue field of $K_\grl$ and $\overline\F_\grl$ the residue field of $\overline K_\grl$ (so $\overline\F_\grl$ is an 
algebraic closure of $\F_\grl$); there is then a canonical isomorphism $G_{K_\grl}/I_\grl\isoarrow G_{\F_\grl} = \Gal(\overline\F_\grl/\F_\grl)$.

Let $p$ be an odd prime.

\subsection{Modular forms and abelian varieties}
Let $f\in S_2(\Gamma_0(N))$ be a newform with trivial Nebentypus.
Let $M_f$ be the subfield of $\C$ generated by the Hecke eigenvalues of $f$ (equivalently,
the Fourier coefficients of the $q$-expansion of $f$ at the cusp $\infty$); this
is a totally real number field, and the fixed embedding $\bQ\hookrightarrow\C$ identifies
$M_f$ with a subfield of $\bQ$.

A construction of Eichler and Shimura associates with $f$ an abelian variety $A_f$ over $\Q$
of dimension $[M_f:\Q]$ and such that $\End^0_\Q(A_f)$ is naturally identified with $M_f$ and
characterized (up to isogeny) by
$$
L(A_f,s) = \prod_{\sigma:M_f\hookrightarrow \C}L(f^\sigma,s),
$$
where $f^\sigma$ is the conjugate of $f$; that is, the newform in $S_2(\Gamma_0(N))$ whose
$q$-expansion at $\infty$ has coefficients
obtained by applying $\sigma$ to those of $f$.

Let $T_pA_f$ be the $p$-adic Tate-module of $A_f$ and let $V_pA_f = T_pA_f\otimes_\Zp\Qp$.
Let $\lambda$ be a prime of $M_f$ above $p$ and let $M_{f,p}=M_f\otimes\Qp$
and let $L$ be a finite (field) extension of $M_{f,\lambda}$. Then
$$
V=V_pA_f\otimes_{M_{f,p}}L
$$
is a two-dimensional $L$-space with a continuous, $L$-linear $G_\Q$-action, which
we denote by $\rho_{f,\lambda}$.
It is potentially semistable at $p$, unramified at $\ell\nmid Np$, and such that
$V^\vee\cong V(-1)$ (the $-1$-Tate twist of $V$). Furthermore, if we fix an
embedding $L\hookrightarrow \C$ that agrees with the inclusion $M_f\hookrightarrow \C$,
then\footnote{Our conventions for $L$-functions of potentially semistable Galois representations of $G_\Q$ or $G_K$ are geometric: the local Euler factors are defined using the characteristic polynomials of geometric Frobenius elements.} $L(V^\vee,s)=L(f,s)$.

Recall that $f$ is ordinary with respect to $\lambda$ if the eigenvalue $a_p(f)$ of the action on $f$
of
the Hecke operator $T_p$, or $U_p$ if $p\mid N$, (equivalently, the $p$th Fourier coefficient of the $q$-expansion
at $\infty$) is a unit at $\lambda$; that is, if $a_p(f)$ is a unit in the ring of integers of $L$.

By the $K$-twist of $A_f$ we mean the abelian variety $A_f^D$ over $\Q$ obtained by twisting
by the cocycle in $H^1(\Q,\Aut_\Q A_f)$ defined by the quadratic character $\chi_D:G_\Q\rightarrow
\{\pm 1\}\subset\Aut_\Q A_f$ associated with $K$ (so $G_K$ is the kernel of $\chi_D$). Then
$$
V_D = V_pA_f^D\otimes_{M_{f,p}}L \cong V\otimes\chi_D
$$
as continuous $L$-linear representations of $G_\Q$ and\footnote{For convenience
we will identify the Galois character $\chi_D$ with the quadratic Dirichlet character
of the same conductor.}
$$
L(A_f^D,s) = \prod_{\sigma:M_f\hookrightarrow\C} L(f^\sigma\otimes\chi_D,s).$$ 
The
natural map $A_f\times A_f^D\rightarrow \Res_{K/\Q} A_f$ is a $\Q$-isogeny with kernel and cokernel
annihilated by $2$.

Let $\eps(f)\in \{\pm 1\}$ be the sign of the functional equation of $L(f,s)$.
Let
$$
L(f,K,s) = L(f,s)L(f\otimes\chi_D,s).
$$
The sign of the functional equation of $L(f,K,s)$ is then $\eps(f,K)=\eps(f)\eps(f\otimes\chi_D)$.

Let $\pi=\otimes\pi_v$ be the cuspidal automorphic representation of $\GL_2(\A)$ such that
$L(\pi,s-1/2) = L(f,s)$.
Then $L(f,K,s) = L(BC_K(\pi),s-1/2)$, where $BC_K(\pi)$ is the base change
of $\pi$ to an automorphic representation of $\GL_2(\A_K)$. For a prime $\grl\mid\ell$ 
of $K$ over $\ell$, we also write $BC_{K_\grl}(\pi_\ell)$ for the base change of 
$\pi_\ell$ to an admissible representation of $\GL_2(K_\grl)$; so 
$BC_{K_\grl}(\pi_\ell)$ is the $\grl$-constituent of $BC_K(\pi)$.

Let $\eps(\pi,K)=\eps(BC_K(\pi),1/2)$
be the global root number of $BC_K(\pi)$. Similarly, for a prime $\ell$,
let $\eps_\ell(\pi,K) = \prod_{\grl\mid \ell}\eps(BC_{K_\grl}(\pi_\ell),1/2)$ be the product
of the local root numbers. Then
$$
\eps(f,K)= \eps(\pi,K) = - \prod_\grl \eps(BC_{K_\grl}(\pi_\ell),1/2) = - \prod_\ell \eps_\ell(\pi,K).
$$
If $\ell$ splits in $K$, then $\eps_\ell(\pi,K)=\eps(\pi_\ell,1/2)^2=+1$ since $BC_{K_\grl}(\pi_\ell)\cong \pi_\ell$, 
so the local contribution to the global
sign $\eps(f,K)$ comes only from primes that are inert or ramified in $K$. Furthermore, if
$\pi_\ell$ is the special representation $\sigma_\ell$, then $\eps_\ell(\pi,K)=-1$ if
$\ell$ is inert or ramified as there is then only one prime $\grl$ of $K$ above $\ell$
and $BC_{K_\grl}(\sigma_\ell)$ is the special representation. 
And if $\pi_\ell$ is the twist $\sigma_\ell\otimes\xi_\ell$
of the special representation by the unique unramified quadratic character $\xi_\ell$,
then $\eps_\ell(\pi,K)=-1$ if $\ell$ is inert, as $BC_{K_\grl}(\pi_\ell)$ is then 
the special representation, and $\eps_\ell(\pi,K)=+1$ if $\ell$ is ramified, as
$BC_{K_\grl}(\pi_\ell)$ is then the twist of the special representation by the unique
unramified quadratic character of $K_\grl$. Here we have used that the root number of
the special representation is $-1$ and the root number of the twist of the special
representation by the unramified quadratic extension is $+1$;
\cite{KramerTunnel} and \cite[Props.~3.5 and 3.6]{JL} are useful references
for these and other facts about epsilon factors and root numbers.

\subsection{Selmer groups}\label{Selmer} Bloch and Kato \cite{BK} (see also \cite{FP}) defined
Selmer groups for geometric $p$-adic Galois representations. For the representation $V$ this
Selmer group is
$$
H^1_f(K,V) = \ker\{H^1(K,V)\stackrel{\res}{\rightarrow}\prod_\grl H^1(K_\grl,V)/H^1_f(K_\grl,V)\},
$$
where $H^1_f(K_\grl,V)=\ker\{H^1(K_\grl,V)\rightarrow H^1(K_\grl,B_{cris}\otimes_\Qp V)\}$,
with $B_{cris}$ the ring of crystalline periods, if $\grl\mid p$,
and $H^1_f(K_\grl,V) = H^1(\F_\grl,V^{I_\grl})$ if $\grl\nmid p$.
By Tate's local Euler characteristic formula and local duality, if $\grl\nmid p$,
\begin{equation*}
\dim_L H^1(K_\grl, V) = \dim_L H^0(K_\grl,V)+\dim_LH^2(K_\grl,V) = 2\dim_L H^0(K_\grl,V),
\end{equation*}
where we have used $V\cong V^\vee(1))$ in the second equality. If follows from 
the local-global compatibility of $V$ with $\pi_\ell$ that
$H^0(K_\grl,V) = V^{G_{K_\grl}}=0$
(cf.~\cite[Lem.~3.1.3]{N-P}), whence $H^1_f(K_\grl,V) = H^1(K_\grl,V) =0$.

Let $S$ be any finite set of primes containing those at which $V$ is ramified (so
those dividing $pN$) and $G_{K,S}$ the
Galois group over K of the maximal extension of $K$ in $\bQ$ that is unramified outside the prime ideals dividing those in $S$.
Since $H^1(K_\grl,V)=0$ if $\grl\nmid p$,
$$
H^1_f(K,V) = \ker\{H^1(G_{K,S},V)\stackrel{\res}{\rightarrow} \prod_{\grl\mid p} 
H^1(K_\grl,V)/H^1_f(K_\grl,V)\}.
$$

The same definitions can, of course, be made with $V$ replaced by $V_D$ as well as with
$K$ replaced by $\Q$ and the primes $\grl$ replaced
with rational primes $\ell$. Then the restriction map from $G_\Q=\Gal(\bQ/\Q)$ to
$G_K$ induces identifications
\begin{gather*}
H^1_f(\Q,V)\oplus H^1_f(\Q,V_D)\isoarrow H^1_f(K,V).
\end{gather*}
The Galois group $\Gal(K/\Q)$ acts on the right-hand side above,
and the left-hand side is identified with the decomposition into the sum of the subgroups
on which $\Gal(K/\Q)$ acts trivially and non-trivially, respectively.

\begin{lem}\label{signlemma} Suppose $p\nmid ND$ and $f$ is ordinary
with respect to $\lambda\mid p$.
If $\dim_L H^1_f(K,V)$ is odd then $\eps(f,K)$ is $-1$.
\end{lem}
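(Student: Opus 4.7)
The plan is to reduce the lemma to Nekov\'a\v{r}'s parity theorem for $p$-ordinary modular forms \cite{Ne-parity}, applied separately to $V$ and $V_D=V\otimes\chi_D$ over $\Q$, by means of the $\Gal(K/\Q)$-isotypic decomposition of $H^1_f(K,V)$ already recorded in Section \ref{Selmer}.

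First, the isomorphism
\[
H^1_f(\Q,V)\oplus H^1_f(\Q,V_D)\isoarrow H^1_f(K,V)
\]
recorded above yields
\[
\dim_L H^1_f(K,V)=\dim_L H^1_f(\Q,V)+\dim_L H^1_f(\Q,V_D),
\]
reducing the problem to determining the parities of the two summands separately.

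Next I would verify that Nekov\'a\v{r}'s parity theorem applies to both $V$ and $V_D$. Since $p\nmid N$ and $f$ is $\lambda$-ordinary, $V$ is a crystalline, $p$-ordinary, self-dual-up-to-Tate-twist Galois representation corresponding to the weight-two newform $f$. Since $p\nmid D$, the quadratic character $\chi_D$ is unramified at $p$, so $V_D=V\otimes\chi_D$ remains $p$-ordinary at $p$ (crystalline after an unramified quadratic twist), is also self-dual up to Tate twist, and corresponds to the weight-two newform attached to $f\otimes\chi_D$. Nekov\'a\v{r}'s theorem therefore gives
\[
(-1)^{\dim_L H^1_f(\Q,V)}=\eps(f)\quad\text{and}\quad (-1)^{\dim_L H^1_f(\Q,V_D)}=\eps(f\otimes\chi_D).
\]

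Multiplying these two identities and invoking the factorisation $\eps(f,K)=\eps(f)\,\eps(f\otimes\chi_D)$ recorded at the end of Section 2.1 yields
\[
(-1)^{\dim_L H^1_f(K,V)}=\eps(f,K),
\]
from which the lemma follows at once. The only real content lies in verifying the hypotheses of Nekov\'a\v{r}'s theorem for both $V$ and $V_D$, and this is precisely why the assumptions $p\nmid ND$ and $\lambda$-ordinarity of $f$ appear in the statement: they guarantee well-behaved ordinary local data at $p$ for both Galois representations. No further input from the global set-up of Theorem \ref{thmB} (such as residual irreducibility or the conditions on the reduction types of $\pi_\ell$) is needed here.
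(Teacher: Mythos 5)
Your proof is correct and follows essentially the same route as the paper: decompose $H^1_f(K,V)$ as $H^1_f(\Q,V)\oplus H^1_f(\Q,V_D)$, apply Nekov\'a\v{r}'s parity theorem \cite[Thm.~12.2.3]{Ne-parity} to each summand (using $p\nmid ND$ and $\lambda$-ordinarity to ensure both $f$ and $f\otimes\chi_D$ are $p$-ordinary weight-two newforms), and conclude via $\eps(f,K)=\eps(f)\eps(f\otimes\chi_D)$. The only cosmetic difference is that you record the full parity identity $(-1)^{\dim_L H^1_f(K,V)}=\eps(f,K)$, whereas the paper merely observes that one of the two summands has odd dimension and so one of the two signs is $-1$; the content is identical.
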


\begin{proof} Since
$$
\dim_L H^1_f(K,V) = \dim_L H^1_f(\Q,V)+\dim_L H^1_f(\Q,V_D)
$$
is odd, one
of $\dim_L H^1_f(\Q,V)$ and $\dim_L H^1_f(\Q,V_D)$ is odd and the other is even.
It then follows from the parity conjecture for the Selmer groups of modular forms that are ordinary
at $\lambda$, proved by Nekov\'a\v{r} \cite[Thm.~12.2.3]{Ne-parity}, that one of the signs $\eps(f)$ and $\eps(f\otimes\chi_D)$
is $-1$ and the other is $+1$. Then $\eps(f,K)= \eps(f)\eps(f\otimes\chi_D)=-1$.
\end{proof}
\vskip 0.1in

\noindent{\bf Connections with the Selmer group of $A_f$.}
 Recall that the $p^\infty$-Selmer group of $A_f/K$ is
 $$
 \Sel_{p^\infty}(A_f/K) = \ker\{H^1(K,A_f[p^\infty])\stackrel{\res}{\rightarrow}\prod_{\grl}
 H^1(K_\grl,A_f(\overline K_\grl))\}
 $$
 and the $p$-primary part of the Tate--Shafarevich group of $A_f/K$ is
 $$
 \Sha(A_f/K)[p^\infty] = \ker\{H^1(K,A_f(\bQ))[p^\infty]\stackrel{\res}{\rightarrow}\prod_{\grl}
 H^1(K_\grl,A_f(\overline K_\grl))\}
 $$
 and that these sit in the fundamental exact sequence:
 \begin{equation*}
 0\rightarrow A_f(K)\otimes\Qp/\Zp\rightarrow \Sel_{p^\infty}(A_f/K) \rightarrow
 \Sha(A_f/K)[p^\infty]\rightarrow 0.
 \end{equation*}

 Let $W=V_pA_f/T_pA_f = A_f[p^\infty]$ (the last identification being $(x_n)\otimes\frac{1}{p^m}
 \mapsto x_m$). Then $\Sel_{p^\infty}(A_f/K)$ consists of those classes
 with restriction at each prime $\grl$ in the image $H^1_f(K_\grl,W)$ of
 the Kummer map $A_f(K_\grl)\otimes\Qp/\Zp\hookrightarrow H^1(K_\grl,W)$.
 Bloch and Kato proved that this subgroup is just the image of $H^1_f(K_\grl,V_pA_f)$ in $H^1(K_\grl,W)$,
where $H^1_f(K_\grl,V_pA_f)$ is defined just as $H^1_f(K_\grl,V)$
(note that $H^1(K_\grl,V_pA_f)$, $H^1_f(K_\grl,V_pA_f)$, and $H^1_f(K_\grl,W)$ are all $0$ if $\grl\nmid p$).
 In particular, if $S$ is a finite set of primes of $K$ containing all those that divide $pN$, 
 then $\Sel_{p^\infty}(A_f/K)\subset H^1(G_{K,S},W)$ consists of those
 classes with restriction to $H^1(K_\grl,W)$ belonging to $H^1_f(K_\grl,W)$ for all $\grl\in S$.
 As the image of $H^1(G_{K,S},V_pA_f)$ in $H^1(G_{K,S},W)$ is the maximal divisible subgroup
 (and has finite index), it follows that
 the maximal divisible subgroup of $\Sel_{p^\infty}(A_f/K)$ is the image in $H^1(K,W)$ of the
 characteristic zero Bloch--Kato Selmer group $H^1_f(K,V_pA_f)$.

 The projective limit of the Kummer maps for the multiplication by $p^n$-maps yields an injection
 $$
 A_f(K)\otimes\Qp\hookrightarrow H^1_f(K,V_pA_f)
 $$
 that is compatible with the fundamental exact sequence, so the cokernel
 has $\Qp$-dimension equal to the corank of $\Sha(A_f/K)[p^\infty]$.

The connection with $H^1_f(K,V)$ is just
$$
H^1_f(K_\grl,V) = H^1_f(K_\grl,V_pA_f)\otimes_{M_{f,p}}L \ \ \text{and} \ \
H^1_f(K,V)=H^1_f(K,V_pA_f)\otimes_{M_{f,p}}L,
$$
from which, together with the fact that $A_f(K)\otimes\Q_p$ is a free $M_{f,p}$-module, we deduce:

\begin{lem}\label{rank1lemma}
If $\rank_\Z A_f(K) = [M_f:\Q]$ and $\#\Sha(A_f/K)[p^\infty] <\infty$, then
$\dim_L H^1_f(K,V)=1$ and the restriction map
$H^1_f(K,V)\stackrel{\res}{\rightarrow} H^1(K_\grl,V)$ is an injection for each $\grl\mid p$.
\end{lem}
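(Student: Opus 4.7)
The lemma contains two assertions, which I would establish in sequence.

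For the dimension $\dim_L H^1_f(K,V)=1$: the approach is to combine the rank hypothesis with the action of $M_f=\End^0_\Q(A_f)$ on $A_f(K)$. Since $A_f(K)\otimes_\Z\Q$ is an $M_f$-module of $\Q$-dimension exactly $[M_f:\Q]=\dim_\Q M_f$, it is automatically free of rank one over $M_f$. Base-changing from $\Q$ to $\Qp$ shows that $A_f(K)\otimes\Qp$ is a free $M_{f,p}$-module of rank one. The hypothesis $\#\Sha(A_f/K)[p^\infty]<\infty$, together with the Kummer-theoretic exact sequence recalled immediately before the lemma, then forces the injection $A_f(K)\otimes\Qp\hookrightarrow H^1_f(K,V_pA_f)$ to be an isomorphism, its cokernel having $\Qp$-corank equal to that of $\Sha(A_f/K)[p^\infty]$, namely zero. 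Hence $H^1_f(K,V_pA_f)$ is itself free of rank one over $M_{f,p}$, and tensoring with $L$ over $M_{f,p}$ yields $\dim_L H^1_f(K,V)=1$.

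For the injectivity of $\res\colon H^1_f(K,V)\to H^1(K_\grl,V)$ at each $\grl\mid p$: since the source is one-dimensional over $L$, the map is injective iff non-zero. The plan is to pick a non-torsion $P\in A_f(K)$; by the identifications just established its Kummer class generates $H^1_f(K,V_pA_f)$ over $M_{f,p}$ and hence generates the $L$-line $H^1_f(K,V)$. Its image under restriction to $H^1(K_\grl,V_pA_f)$ coincides with the local Kummer class at $\grl$ of the image of $P$ in $A_f(K_\grl)$. Because the inclusion $A_f(K)\hookrightarrow A_f(K_\grl)$ preserves non-torsion elements and the local Kummer map $A_f(K_\grl)\otimes_{\Zp}\Qp\hookrightarrow H^1(K_\grl,V_pA_f)$ is injective with kernel equal to the $p^\infty$-torsion of $A_f(K_\grl)$, this local class is non-zero in $H^1(K_\grl,V_pA_f)$.

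The main delicate step will be checking that this non-vanishing survives the tensor product with $L$ over $M_{f,p}$, equivalently, that the $\lambda$-isotypic component of the local Kummer class is non-zero, where $\lambda\mid p$ is the prime of $M_f$ with $L\supset M_{f,\lambda}$. For this I would invoke the $M_{f,p}$-equivariance of the Kummer maps and of restriction, exploiting that the image class must generate the whole free rank-one $M_{f,p}$-module $H^1_f(K,V_pA_f)$ and so cannot be annihilated by the idempotent $e_\lambda\in M_{f,p}$; in the elliptic-curve case $[M_f:\Q]=1$ this subtlety vanishes and the argument collapses to the elementary observation that a non-torsion rational point has non-trivial local Kummer image at every prime above $p$.
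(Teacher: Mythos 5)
Your proposal is correct and follows the same route the paper leaves implicit: the Kummer exact sequence plus the finiteness of $\Sha(A_f/K)[p^\infty]$ yields an $M_{f,p}$-isomorphism $A_f(K)\otimes\Qp\isoarrow H^1_f(K,V_pA_f)$, freeness of rank one over $M_{f,p}$ then gives the dimension count, and the injectivity is read off from the commutative square with the local Kummer map. One small streamlining for your ``delicate step'': since $M_{f,p}$ is a finite product of fields it is semisimple, so $L$ is flat over $M_{f,p}$; hence tensoring the already-injective $M_{f,p}$-linear map $H^1_f(K,V_pA_f)\to H^1_f(K_\grl,V_pA_f)$ with $L$ immediately preserves injectivity, making the idempotent bookkeeping unnecessary.
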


\subsection{More Galois cohomology}\label{moreGalois}
Let $S$ be any finite set of primes containing those dividing $pN$, and let
$$
H^i(K_p,V) = \prod_{\grl\mid p} H^i(K_\grl,V), \ \ \text{and} \ \
H^1_{str}(K,V) = \ker\{H^1(G_{K,S},V)\stackrel{\res}{\rightarrow} H^1(K_p,V)\}.
$$
Note that $H^1_{str}(K,V)$ (often called the `strict' Selmer group of $V$) is independent of $S$.

\begin{lem}\label{dim2lemma} $\dim_L\mathrm{im}\{H^1(G_{K,S},V)\stackrel{\res}{\rightarrow} H^1(K_p,V)\}=2$.
\end{lem}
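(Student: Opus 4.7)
The plan is to apply global Poitou--Tate duality, leveraging the self-duality $V\cong V^\vee(1)$ (a Tate twist of $V^\vee\cong V(-1)$ recorded in Section \ref{Selmer}) to identify the desired image as a Lagrangian subspace of a $4$-dimensional ambient space. The first step is to compute $\dim_L H^1(K_p,V)=4$. By hypothesis (c) of Theorem \ref{thmB}, $p$ splits in $K$, so each of the two primes $\grp,\bar\grp$ above $p$ has completion $K_\grl\cong\Q_p$, and by hypothesis (a), $p\nmid N$, so $V|_{G_{K_\grl}}$ is crystalline. For $\grl\mid p$, Deligne's purity gives $V^{G_{K_\grl}}=0$: the crystalline Frobenius eigenvalues on $V$ have complex absolute value $\sqrt{p}$ and so cannot equal $1$. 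Local Tate duality combined with $V\cong V^\vee(1)$ then yields $H^2(K_\grl,V)=0$, and the local Euler--Poincar\'e characteristic formula gives $\dim_L H^1(K_\grl,V)=2$; summing over the two primes yields $\dim_L H^1(K_p,V)=4$.

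The next step is to note that $\bigoplus_v H^1(K_v,V)$ collapses to $H^1(K_p,V)$: Section \ref{Selmer} already gives $H^1(K_\grl,V)=0$ for every finite $\grl\nmid p$, and the unique archimedean place of $K$ is complex, so $H^1(K_v,V)=0$ there as well. The same holds for $V^\vee(1)\cong V$. The Poitou--Tate nine-term exact sequence then says that, under the sum of local Tate pairings $H^1(K_p,V)\times H^1(K_p,V^\vee(1))\to L$, the images
$$W:=\mathrm{im}\bigl(H^1(G_{K,S},V)\to H^1(K_p,V)\bigr),\qquad W^\vee:=\mathrm{im}\bigl(H^1(G_{K,S},V^\vee(1))\to H^1(K_p,V^\vee(1))\bigr)$$
are exact mutual annihilators; in particular $\dim_L W+\dim_L W^\vee=4$. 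A choice of $L[G_K]$-isomorphism $V\simeq V^\vee(1)$ induces by functoriality an isomorphism of Galois cohomology groups compatible with restriction at $p$, and hence carries $W$ onto $W^\vee$; thus $\dim_L W=\dim_L W^\vee=2$, as claimed.

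The only delicate step is the vanishing $V^{G_{K_\grl}}=0$ at the primes above $p$, since the argument of Section \ref{Selmer} was formulated only for $\ell\nmid p$; but with the good-reduction hypothesis $p\nmid N$ in force, Deligne's purity handles this cleanly, and the remainder is a routine application of Poitou--Tate together with self-duality.
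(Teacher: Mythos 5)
Your proof is correct, and it takes a genuinely different route through the Poitou--Tate machinery than the paper does. Both arguments begin from the vanishing of $H^1(K_v,V)$ at places $v\nmid p$ and use global duality, but they diverge at the key step. The paper identifies $H^1_{str}(K,V)$ (the kernel of restriction to $H^1(K_p,V)$) with $H^2(G_{K,S},V^\vee(1))^\vee\cong H^2(G_{K,S},V)^\vee$ via the nine-term sequence, and then computes $\dim H^1(G_{K,S},V)-\dim H^2(G_{K,S},V)=2$ directly from Tate's \emph{global} Euler characteristic formula (using $H^0(G_{K,S},V)=0$ and that $K$ is imaginary quadratic). It never needs to know $\dim H^1(K_p,V)$ explicitly. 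You instead compute $\dim H^1(K_p,V)=4$ from the \emph{local} Euler characteristic formula (after checking $H^0=H^2=0$ at $\grp,\bar\grp$ via the crystalline Frobenius eigenvalues having absolute value $\sqrt p$), invoke the middle exactness of Poitou--Tate to see that the images $W$ and $W^\vee$ are exact annihilators under the sum of local pairings, and then use a $G_K$-equivariant isomorphism $V\cong V^\vee(1)$ to conclude $\dim W=\dim W^\vee$, forcing $\dim W=2$. Your ``Lagrangian'' argument is slightly more hands-on (it requires the explicit local dimension count and the self-duality identification of the two images), whereas the paper's version is leaner and sidesteps the local computation entirely; on the other hand, yours makes the symmetry $\dim W=\dim W^\vee$ transparent, which is a nice conceptual bonus. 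One small point of precision: the vanishing $V^{G_{K_\grl}}=0$ for $\grl\mid p$ is best phrased via $D_{\mathrm{cris}}$ — a trivial subrepresentation would force $1$ to be a crystalline Frobenius eigenvalue, contradicting purity — rather than as a statement about eigenvalues of Frobenius acting on $V$ itself (which is not an unramified representation at $p$); your appeal to ``crystalline Frobenius eigenvalues'' indicates you have this in mind, so this is a matter of wording, not a gap.
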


\begin{proof} By Tate global duality, $H^1_{str}(K,V)$ is dual to $H^2(G_{K,S},V)$ (here 
we are using that $H^1(K_\grl,V)=0$ if $\grl\nmid p$ and $H^2(K_\grl,V)=0$ for all $\grl$),
so 
\begin{equation*}\begin{split}
\dim_L H^1(G_{K,S},V) - & \dim_L H^1_{str}(K,V) \\
& = \dim_{L} H^1(G_{K,S},V) - \dim_{L} H^2(G_{K,S},V) \\
& =2,
\end{split}\end{equation*}
the last equality following from $H^0(G_{K,S},V)=0$ and Tate's formula for the global Euler characteristic.
\end{proof}

Suppose that $p$ splits in $K$:
$$
p = \grp \bar\grp.
$$
Let
$$
H_\grp^1(K,V) = \ker\{H^1(G_{K,S},V)\stackrel{\res}{\rightarrow} H^1(K_{\grp},V)\},
$$
and let $H^1_{\bar\grp}(K,V)$ be defined similarly; these are independent of the finite
set $S$.

\begin{lem}\label{keylemma} If
$\dim_L\im\{H^1_f(K,V)\stackrel{\res}{\rightarrow} H^1_f(K_p,V)\}=1$, then
$$
H^1_\grp(K,V) = H^1_{str}(K,V) = H^1_{\bar\grp}(K,V).
$$
If also $\dim_L H^1_f(K,V)=1$, then $H^1_\grp(K,V) = H^1_{str}(K,V) = H^1_{\bar\grp}(K,V)=0$.
\end{lem}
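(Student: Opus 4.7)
The plan is to translate the statement into linear algebra inside the $4$-dimensional $L$-space
$H^1(K_p,V) := H^1(K_\grp,V) \oplus H^1(K_{\bar\grp},V)$, each summand being $2$-dimensional by the local Euler characteristic formula together with the vanishing of $H^0(K_\grl,V)$ for $\grl\mid p$. Let $\phi$ denote the restriction map $H^1(G_{K,S},V)\to H^1(K_p,V)$, whose image $X := \im\phi$ is $2$-dimensional by Lemma~\ref{dim2lemma}. Write $A_\grp = H^1(K_\grp,V) \oplus 0$ and $A_{\bar\grp} = 0 \oplus H^1(K_{\bar\grp},V)$, and write $F_\grp, F_{\bar\grp}$ for the corresponding $1$-dimensional Bloch--Kato subspaces, so that $L := H^1_f(K_p,V) = F_\grp \oplus F_{\bar\grp}$. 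Using that $H^1(K_\grl,V) = 0$ for $\grl\nmid p$, the definitions give
\begin{equation*}
H^1_{str}(K,V) = \ker\phi, \quad H^1_\grp(K,V) = \phi^{-1}(A_{\bar\grp}), \quad H^1_{\bar\grp}(K,V) = \phi^{-1}(A_\grp), \quad H^1_f(K,V)=\phi^{-1}(L),
\end{equation*}
from which one reads off the isomorphisms $H^1_\grp/H^1_{str}\isoarrow X\cap A_{\bar\grp}$, $H^1_{\bar\grp}/H^1_{str}\isoarrow X\cap A_\grp$, and $H^1_f/H^1_{str} \isoarrow X\cap L$. Thus the first conclusion is equivalent to $X\cap A_\grp = X\cap A_{\bar\grp} = 0$.

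The key ingredient is complex conjugation $c\in G_\Q$. Since $V$ is a $G_\Q$-representation and $c$ permutes the two primes above $p$, the map $\phi$ is $c$-equivariant, where $c$ acts on $H^1(K_p,V)$ by swapping $A_\grp\leftrightarrow A_{\bar\grp}$ and $F_\grp\leftrightarrow F_{\bar\grp}$; in particular $X$ and $L$ are $c$-stable. Set $d := \dim_L X\cap A_\grp$, so by this symmetry $\dim_L X\cap A_{\bar\grp} = d$ as well. The case $d=2$ is impossible, for then $X$ would contain both $A_\grp$ and $A_{\bar\grp}$, giving $X = A_\grp+A_{\bar\grp} = H^1(K_p,V)$, contradicting $\dim X = 2$. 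If $d=1$, then since $A_\grp\cap A_{\bar\grp} = 0$ the inclusion $(X\cap A_\grp)\oplus (X\cap A_{\bar\grp})\hookrightarrow X$ is an equality, and intersecting with $L = F_\grp\oplus F_{\bar\grp}$ yields
\begin{equation*}
X\cap L = \bigl((X\cap A_\grp)\cap F_\grp\bigr) \oplus \bigl((X\cap A_{\bar\grp})\cap F_{\bar\grp}\bigr).
\end{equation*}
The two summands are interchanged by $c$, so $\dim_L X\cap L$ is even --- contradicting the hypothesis $\dim_L X\cap L = 1$. Hence $d=0$ and the first conclusion follows.

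For the second conclusion, the exact sequence $0\to H^1_{str}(K,V)\to H^1_f(K,V)\to X\cap L\to 0$ extracted above gives $\dim_L H^1_{str}(K,V) = \dim_L H^1_f(K,V) - \dim_L X\cap L = 1-1 = 0$, and combining with the first conclusion forces $H^1_\grp = H^1_{str} = H^1_{\bar\grp} = 0$. The main point of the proof --- and where I expect the real content to lie --- is the recognition that purely linear-algebraic data (two $2$-dimensional subspaces $X, L$ of the $4$-dimensional space with $\dim X\cap L = 1$) does \emph{not} by itself force $X\cap A_\grp = 0$; it is the parity constraint coming from $c$-symmetry that eliminates the `split' configuration $d=1$.
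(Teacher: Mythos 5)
Your proof is correct and takes essentially the same approach as the paper: both reduce to the two-dimensional image $X$ of $H^1(G_{K,S},V)$ in $H^1(K_p,V)$ via Lemma \ref{dim2lemma}, then use the $c$-action swapping $\grp\leftrightarrow\bar\grp$ to rule out $X\cap H^1(K_\grp,V)\neq 0$ by a parity obstruction. The paper organizes the final contradiction a little differently --- it decomposes $X$ into $c$-eigenspaces $X=X^+\oplus X^-$, notes that the one-dimensional $c$-stable image of $H^1_f(K,V)$ must equal one of $X^\pm$, and then deduces that $X$ lies inside $H^1_f(K_p,V)$ --- whereas you directly write $X\cap H^1_f(K_p,V)$ as a direct sum of two $c$-conjugate pieces and observe its dimension is even; these are minor variations on the same argument.
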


\begin{proof}
By Lemma \ref{dim2lemma}, the image $X$ of $H^1(G_{K,S},V)$ in $H^1(K_p,V)$ is two-dimensional
over $L$. Let $X_\grp$ and $X_{\bar\grp}$ be the respective images of $H^1_\grp(K,V)$
and $H^1_{\bar\grp}(K,V)$ in $H^1(K_p,V)$; the action of the non-trivial automorphism $c$ of $K$ swaps $X_\grp$ and $X_{\bar\grp}$.  Let $X_f$ be the image of $H^1_f(K,V)$ in $H^1(K_p,V)$; this is stable under $c$ and one-dimensional by hypothesis.

Suppose $X_\grp\neq 0$. Then $X_{\bar\grp}\neq 0$ and $X = X_\grp\oplus X_{\bar\grp}$, from which it follows that $X=X^+\oplus X^-$ with
$X^\pm = \{ x\pm c(x) \ : \ x\in X_\grp\}$, and $X^\pm$ is one-dimensional. Then
$X_f$ equals $X^+$ or $X^-$. But it then follows that
$X_\grp\subset H^1_{f}(K_{\bar \grp},V)$ and $X_{\bar\grp}\subset H^1_{f}(K_{\grp},V)$
and hence that $X_f$ is two-dimensional,
a contradiction. Therefore, $X_\grp = 0 = X_{\bar\grp}$.

If also $\dim_L H^1_f(K,V) =1$, then $H^1_{str}(K,V)=0$, whence the final conclusion.
\end{proof}

\subsection{The Heegner points $P_K(f)$}\label{Heegnerpoints}
We consider two cases:
\begin{itemize}
 \item[I.] Every prime $\ell\mid N$ either splits or ramifies in $K$.
 \item[II.] $N=N^-N^+$ with $N^-$ a nontrivial product of an even number of primes that are inert in $K$
 and $N^+$ is the product of primes that split in $K$ (in particular, $(D,N)=1$).
\end{itemize}

Suppose first that $f$ and $K$ are as in Case I. Let $\T$ be the Hecke algebra generated over $\Z$ by the
usual Hecke operators $T_\ell$, for primes $\ell\nmid N$, acting on the
space of cuspforms $S_2(\Gamma_0(N))$. Let $\T_{M_f}=\T\otimes M_f$ and let $\veps_f\in \T_{M_f}$
be the idempotent corresponding to the projection $\T_{M_f}\twoheadrightarrow M_f$ sending
$T_\ell$ to the eigenvalue $a_\ell(f)$ of its action on the newform $f$.
Let $\grp_f=\ker\veps_f|_{\T}$.

Let $X=X_0(N)$ be the modular curve over $\Q$. Then $f$ determines a differential
$\omega_f\in \Omega^1(X)\otimes\C = \Omega^1(X(\C))$: the pullback of $\omega_f$ to the upper half-plane
via the usual complex uniformization of $X(\C)$ is $2\pi if(\tau)d\tau$. The operator $T_\ell$
can also be viewed as acting via a correspondence on $X$ such that $T_\ell\cdot\omega_f = \omega_{T_\ell\cdot f}= a_\ell(f)\cdot \omega_f$. The induced action of the $T_\ell$'s on
the Jacobian $J(X)$ of $X$ realizes $\T$ as a subring of $\End_\Q(J(X))$, 
and $\omega_f$ is a basis for the 
one-dimensional $M_f$ space $\veps_f(\Omega^1(J(X))\otimes M_f)$ (where we 
have identified $\Omega^1(X)=\Omega^1(J(X))$ in the usual way). The abelian variety $A_f$ is just the quotient $A_f = J(X)/\grp_f J(X)$, and 
we let $\phi:J(X)\rightarrow A_f$ be the quotient map. We let $\omega\in \Omega^1(A_f)\otimes M_f$ be the $1$-form such that
$\phi^*\omega = \omega_f$.

Let $\O_K$ be the integer ring of $K$ and $\grn\subset\O_K$ an ideal of norm $N$; this
is possible by the hypothesis that each prime $\ell\mid N$ either splits or ramifies in $K$. The
degree $N$ isogeny $\C/\O_K\rightarrow \C/\grn^{-1}$ (the canonical projection)
of CM elliptic curves is cyclic since $N$ is square free (in particular, $\ell^2\nmid N$ if $\ell$ ramifies in $K$) and 
defines a point $P\in X(H)$ on $X$ over the Hilbert class field $H$ of $K$.
Let $D_K= \sum_{\sigma\in\Gal(H/K)} P^\sigma \in \Div(X)$. For $\ell\nmid N$, let
$D_{K,\ell} = (T_\ell-1-\ell)\cdot D_K\in \Div^0(X)$ (the degree of the correspondence
$T_\ell$ is $\ell+1$). Let $Q_K(f)= \frac{1}{a_\ell(f)-1-\ell}\veps_f\cdot [D_{K,\ell}]
\in J(X)(K)\otimes M_f$. This is independent of $\ell$ since $\veps_f\cdot T_\ell = a_\ell(f)\cdot\veps_f$ in $\T_{M_f}$. Put 
$$
P_K(f) = \phi(Q_K(f))\in A_f(K)\otimes M_f.
$$

For the purposes of comparison with other constructions, we also consider
$D_K^0 = D_K-\#\Gal(H/K)\cdot\infty \in \Div^0(X)$, where $\infty\in X(\Q)$ is the usual cusp
at infinity, and $Q_K^0 = [D^0_K] = \sum_{\sigma\in\Gal(H/K)}[P-\infty]^\sigma \in J(X)(K)$.
As $T_\ell\cdot\infty = (1+\ell)\cdot \infty$ (that is, the cusps are Eisenstein), 
$\veps_f\cdot Q_K^0 = Q_K(f)$. 
Similarly, if $\xi\in \Div(X)\otimes\Q$ is the normalized, degree one Hodge divisor
defined in \cite[\S\S 1.2.2, 3.1.3]{YZZ}, we let $D_K^\xi = D_K-\#\Gal(H/K)\cdot\xi \in \Div^0(X)\otimes \Q$
and $Q_K^\xi = [D_K^\xi] = \sum_{\sigma\in \Gal(H/K)} 
[P-\xi]^\sigma\in J(X)(K)\otimes\Q$. It follows directly from the expression
in \cite[\S 3.1.3]{YZZ} for the Hodge divisor in terms of the canonical divisor that 
$\xi$ is also Eisenstein, so $\veps_f\cdot Q_K^\xi = Q_K(f)$.

In Case II we let $X$ be the Shimura curve over $\Q$ associated with the indefinite
quaternion algebra $B$ of discriminant $N^-$ and an Eichler order $\O_{B,N^+}$ of level
$N^+$ in a maximal order $\O_B$ of $B$. Let $K\hookrightarrow B$ be
an embedding such that $K\cap \O_{B,N^+} = \O_K$. Replacing $f$ with its
Jacquet-Langlands transfer $f_B$ to the space of weight 2 cuspforms for the subgroup
determined by $O_{B,N^+}$ (and normalized as in \cite[\S 2.8]{Hunter} to be defined over $M_f$)
and $\grn$ by an integral ideal $\grn^+$ of $K$ with norm $N^+$, 
there are constructions analogous to those yielding $Q_K(f)$ and $P_K(f)$ in Case I 
that in this case yield $Q_K(f) = Q_K(f_B)\in J(X)(K)\otimes M_f$ and $P_K(f)=P_K(f_B)\in A_f(K)\otimes M_f$.
We can also define $Q_K^\xi$ in this case, and, just as in Case I, $\veps_f\cdot Q_K^\xi = Q_K(f)$.

\subsection{The Gross--Zagier theorem}  We recall a consequence of a special case of the general Gross--Zagier formula of Yuan, Zhang, and Zhang
\cite{YZZ}. 

Consider the following hypotheses:
\begin{equation}
\text{$N$ is squarefree,}\tag{sqf}
\end{equation}
\begin{equation}
\eps(f,K)=-1.\tag{sgn}
\end{equation}
As the central character of $\pi$ is trivial, the first of these hypotheses implies that
for each $\ell\mid N$, $\pi_\ell$ is either $\sigma_\ell$ or $\sigma_\ell\otimes\xi_\ell$, where
$\sigma_\ell$ is the special representation and $\xi_\ell$ is the unramified quadratic character
of $\Q_\ell^\times$.
The second implies that that the number of primes $\ell\mid N$ for which $\eps_\ell(\pi,K)=-1$
is even. In particular, if $(D,N)=1$ then, since $\eps_\ell(\pi,K)=+1$ if $\ell$ splits in $K$
and $\eps_\ell(\pi,K)=-1$ if $\ell\mid N$ is inert in $K$, 
the number of prime divisors of $N$ that are inert in $K$ is even. 

Consider the additional hypothesis:
\begin{equation}
\text{If $(D,N)\neq 1$: $\pi_\ell\cong\sigma_\ell\otimes\xi_\ell$ for each
 $\ell\mid (D,N)$, and each $\ell\mid \frac{N}{(D,N)}$ splits in $K$.}\tag{ram}
 \end{equation}
If (sqf), (sgn), and (ram) hold, then $f$ and $K$ are as in either Case I or Case II
of Section \ref{Heegnerpoints}.

\begin{prop}[{\cite{YZZ}}]\label{YZZprop} Suppose 
{\rm (sqf)}, {\rm (sgn)}, and {\rm (ram)} hold. If $P_K(f)\neq 0$, then $\ord_{s=1}L(f,K,s)=1$.
\end{prop}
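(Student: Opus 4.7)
The plan is to combine the general Gross--Zagier formula of Yuan, Zhang, and Zhang \cite{YZZ} with the positivity of the N\'eron--Tate height pairing. First I would observe that hypothesis (sgn) gives $\eps(f,K)=-1$, so the functional equation for $L(f,K,s)$ forces $L(f,K,1)=0$; hence $\ord_{s=1}L(f,K,s)\ge 1$ automatically, and the content of the proposition is the \emph{upper bound} $\ord_{s=1}L(f,K,s)\le 1$.

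The hypotheses (sqf), (sgn), and (ram) place $f$ and $K$ in either Case I or Case II of \S\ref{Heegnerpoints}, so the Heegner point $P_K(f)\in A_f(K)\otimes M_f$ is defined via the uniformization $\phi\colon J(X)\rightarrow A_f$ of either a modular curve or a Shimura curve $X$, arising from a Jacquet--Langlands transfer $f_B$ of $f$. The key input is the explicit Gross--Zagier identity proved in \cite{YZZ}, which in our normalization gives
$$
\langle P_K(f),P_K(f)\rangle \stackrel{.}{=} L'(f,K,1),
$$
where the height pairing is associated with the symmetric ample line bundle on $A_f$ used to produce $\omega$, and $\stackrel{.}{=}$ denotes equality up to a nonzero scalar. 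The nonvanishing of this scalar in our setting is precisely where hypothesis (ram) is used: in the factorization of the \cite{YZZ} formula the local factor at each $\ell\mid N$ is a zeta integral against the local component of the test vector coming from $f_B$, and these local integrals are nonzero because (ram) pins down $\pi_\ell$ at primes dividing $(D,N)$ and forces the remaining primes of $N$ to split in $K$, so the test vector matches the prescription of \cite{YZZ}. The normalization of $P_K(f)$ in \S\ref{Heegnerpoints} (via the Hodge-normalized divisor $Q_K^\xi$, which equals $Q_K(f)$ up to the idempotent $\veps_f$) is exactly the one that makes this comparison go through.

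Given the formula, the conclusion is immediate from positivity: the N\'eron--Tate pairing on $A_f(K)\otimes\R$ is positive definite modulo torsion, and this positivity extends to $A_f(K)\otimes_\Z M_f\otimes_\Q\R$, so the assumption $P_K(f)\neq 0$ yields $\langle P_K(f),P_K(f)\rangle\neq 0$. Therefore $L'(f,K,1)\neq 0$, which together with $L(f,K,1)=0$ gives $\ord_{s=1}L(f,K,s)=1$. The only delicate step is the invocation of \cite{YZZ}: one must verify that the specific Heegner class $P_K(f)=\phi(Q_K(f))$ built in \S\ref{Heegnerpoints} is, up to a nonzero scalar, the test vector used in that formula under exactly the hypotheses (sqf), (sgn), (ram). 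That is a local, representation-theoretic check, and once made the rest of the argument is essentially a one-line application of positivity and the functional equation.
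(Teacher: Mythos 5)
Your proposal is correct and takes the same route as the paper's proof: apply the explicit form of Theorem~3.13 of \cite{YZZ} to obtain $\langle P_K(f), P_K(f) \rangle_\CL \stackrel{.}{=} L'(f,K,1)$ (after matching the normalization of $P_K(f)$ with the test-vector point $P(f_1)^\iota$ of \cite{YZZ}), invoke positive-definiteness of the N\'eron--Tate pairing to get $L'(f,K,1)\neq 0$, and combine with the vanishing of $L(f,K,1)$ forced by $\eps(f,K)=-1$. Two small imprecisions that don't affect the argument: the symmetric ample line bundle $\CL$ defining the height pairing is chosen independently of $\omega$ (which is a differential form normalized via $\phi^*\omega=\omega_f$, not via $\CL$), and the primary role of (ram) in the paper's proof is to put $f$ and $K$ in Case I or Case II of \S\ref{Heegnerpoints}---so that $P_K(f)$ is even defined and the Shimura-curve setup of \cite{YZZ} applies---rather than to ensure nonvanishing of local zeta integrals, which is handled internally by the YZZ formula once the test vector is chosen correctly.
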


We explain how Proposition \ref{YZZprop} follows from the main result of \cite{YZZ}. 
Let $\iota:M_f\hookrightarrow \C$ be the identity map. We first note
that $P_K(f) = h_K \cdot P(f_1)^\iota$, where $h_K = \#\Gal(H/K)$ and
$P(f_1)\in A_f(K)$ is 
associated to the homomorphism $f_1=\phi:J(X)\rightarrow A_f$ and the point $P\in X(H)$ as in \cite[\S 3.2.5]{YZZ}. Let $\CL$ be a symmetric, ample line bundle on 
$A_f$ and $\lambda:A_f\rightarrow A_f^\vee$ the associated polarization. Then
$\lambda(P_K(f))\in A_f^\vee(K)\otimes M_f$ is just the point 
$h_K\cdot P(f_2)^\iota$ as in {\it loc.~cit.}~with $f_2 = \lambda\circ\phi:J(X)\rightarrow A_f^\vee$. Let $\langle\cdot,\cdot\rangle_\CL$
be the N\'eron-Tate height-pairing on $A_f(K)$ associated with the line bundle $\CL$, and let
$\langle \cdot,\cdot\rangle_{NT}$ be the canonical height pairing on 
$A_f(\overline K)\otimes \R \times A_f^\vee(\overline K)\otimes \R$. Then
\begin{equation*}\begin{split}
\langle P_K(f), P_K(f) \rangle_\CL = \langle P_K(f),\lambda(P_K(f)\rangle_{NT} & = h_K^2 \langle P(f_1)^\iota, P(f_2)^\iota\rangle_{NT} \\ 
&  = \frac{ h_K^2\zeta(2) L'(f,K,1) \cdot (f_1\circ f_2^\vee)^\iota}{4L(\chi_D,1)^2 L(\mathrm{Sym}^2 f,2) \mathrm{vol}(X)},
\end{split}\end{equation*}
where $f_1\circ f_2^\vee \in \End^0_\Q (A_f) = M_f$.  The last equality is just
\cite[Thm.~3.13]{YZZ}. As $P_K(f)\neq 0$ if and only if $\langle P_K(f),P_K(f)\rangle_\CL\neq 0$, the proposition follows.

If $P_K(f) = \sum P\otimes r_P$ with $P$ running over a basis of $A_f(K)\otimes\Q$
and $r_P\in M_f$,
then for any $\sigma:M_f\hookrightarrow \R$, $P_K(f^\sigma) = \sum P\otimes \sigma(r_P)$. So
$P_K(f)\neq 0$ if and only if $P_K(f^\sigma)\neq 0$ for all $\sigma$. Appealing to the above
proposition for all the Galois conjugates $f^\sigma$ of $f$ we deduce:

\begin{coro}\label{GZcoro}
If $P_K(f)\neq 0$, then $\ord_{s=1}L(A_f/K,s) = [M_f:\Q]$.
\end{coro}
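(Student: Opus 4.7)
The plan is to apply Proposition \ref{YZZprop} to each Galois conjugate $f^\sigma$ of $f$ and then combine the resulting vanishing statements through the factorisation of $L(A_f/K,s)$.

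First I would verify that the standing hypotheses (sqf), (sgn), (ram) of Proposition \ref{YZZprop} transfer automatically from $f$ to every conjugate $f^\sigma$. The condition (sqf) depends only on the common level $N$. For (sgn), the sign $\eps(f)\in\{\pm 1\}$ is the eigenvalue of the Atkin--Lehner involution $w_N$ on $f$ and so is fixed by any embedding $\sigma:M_f\hookrightarrow\C$, giving $\eps(f^\sigma,K)=\eps(f,K)=-1$; the same reasoning applies to $f\otimes\chi_D$. For (ram), the isomorphism class of the local component $\pi_\ell$ at a prime $\ell\mid(D,N)$ is determined by the underlying automorphic representation and is independent of the chosen embedding of $M_f$ into $\C$.

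Next, I would invoke the expression $P_K(f)=\sum_P P\otimes r_P$ (with $P$ ranging over a $\Q$-basis of $A_f(K)\otimes\Q$ and $r_P\in M_f$) noted in the paragraph preceding the corollary. Galois conjugation yields $P_K(f^\sigma)=\sum_P P\otimes\sigma(r_P)$ in $A_f(K)\otimes M_f$. Since every $\sigma$ is injective, $P_K(f)\neq 0$ is equivalent to $P_K(f^\sigma)\neq 0$ for every (equivalently, some) $\sigma$. Under the hypothesis $P_K(f)\neq 0$, Proposition \ref{YZZprop} therefore applies to every conjugate and gives $\ord_{s=1}L(f^\sigma,K,s)=1$ for each of the $[M_f:\Q]$ embeddings.

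Finally, I would combine these with the factorisation
$$
L(A_f/K,s)=L(A_f,s)L(A_f^D,s)=\prod_{\sigma:M_f\hookrightarrow\C}L(f^\sigma,s)L(f^\sigma\otimes\chi_D,s)=\prod_\sigma L(f^\sigma,K,s)
$$
to conclude $\ord_{s=1}L(A_f/K,s)=[M_f:\Q]$. There is no real obstacle here; the only point demanding attention is the compatibility between Galois conjugation on $M_f$ and the construction of $P_K(\cdot)$, which ultimately comes down to the identity $\sigma\cdot\veps_f=\veps_{f^\sigma}$ in $\T_{M_f}$ and is made transparent by the explicit formula quoted from the paper.
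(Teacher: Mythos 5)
Your proof is correct and follows essentially the same route as the paper: express $P_K(f)=\sum P\otimes r_P$, observe that $P_K(f^\sigma)=\sum P\otimes\sigma(r_P)$ is nonzero for every embedding $\sigma$ since each $\sigma$ is injective, apply Proposition \ref{YZZprop} to each conjugate, and combine via the factorisation $L(A_f/K,s)=\prod_\sigma L(f^\sigma,K,s)$. The extra care you take to check that the hypotheses (sqf), (sgn), (ram) are insensitive to $\sigma$ is a correct and worthwhile spelling-out of a point the paper leaves implicit.
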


\subsection{A $p$-adic $L$-function and a formal log of $P_K(f)$}
As described in the introduction, our proof that $P_f(K)\neq 0$, and 
hence of Theorem \ref{thmB}, hinges on
an identity expressing the value of a certain $p$-adic $L$-function as a non-zero
multiple of the square of a formal logarithm of $P_K(f)$. We now recall
this $L$-function and identity.

Let $K_\infty/K$ be the anticyclotomic $\Zp$-extension of $K$; so $\Gamma=\Gal(K_\infty/K)\cong \Z_p$
and conjugation by $c$ sends
$\gamma\in \Gamma$ to $\gamma^{-1}$. Let $\Psi:G_K\twoheadrightarrow\Gamma$ be the canonical projection. We continue to assume that $p=\grp\bar\grp$ splits in $K$.

We also assume that $\lambda$ is the prime of $M_f$ determined by the fixed embedding $\bQ\hookrightarrow \overline K_\grp$. And for the purposes of $p$-adic interpolation and comparisons with complex values, we fix an embedding $\overline K_{\grp}\hookrightarrow\C$ such that induced complex embedding of $\bQ$ is just the fixed one.

 Given  a continuous character $\psi:\Gamma\rightarrow\Q_p^\times$ we consider $\psi$ to be a Galois character via composition with the projection $\Psi$.
We say that such a $\psi$ is Hodge-Tate
if $\psi$ is Hodge-Tate as a representation of both $G_{K_\grp}$ and
$G_{K_{\bar\grp}}$. As $\psi$ is anticyclotomic (that is, $\psi(c^{-1}gc)= \psi(g)^{-1}$),
$\psi$ is Hodge-Tate if and only if it is Hodge-Tate as a representation of one of
$G_{K_\grp}$ and $G_{K_{\bar\grp}}$; the respective Hodge-Tate weights must be
$-n$ and $n$ for some integer $n$, and in this case we say that $\psi$ is Hodge-Tate
of weight $(-n,n)$. The Galois character $\psi$ is the $p$-adic avatar
of a unitary algebraic Hecke character $\psi^{\alg}$ of $K$ with infinity type $z^{n}\bar z^{-n}$.
The character $\psi^{\alg}:\A_K^\times\rightarrow \C^\times$ is given by
$\psi^\alg((x_v)) = x_\grp^{-n}x_{\bar\grp}^{n}x_\infty^{n}\bar x_\infty^{-n} \cdot\psi\circ\rec_K((x_v))$, where $\rec_K:K^\times\backslash\A_K^\times\rightarrow G_K^{ab}$ is the reciprocity map of class field theory, which we normalize so that uniformizers correspond to geometric Frobenius elements. In particular, there is an equality of
$L$-functions
$L(\psi^{\alg},s) = L(\psi,s)$.  The characters $\psi$ and $\psi^{\alg}$ are unramified at all places not dividing $p$.  
Let $\Sigma^{c}_\grp$ be the set of crystalline characters $\psi:\Gamma\rightarrow \Q_p^\times$ of
weight $(-n,n)$ with $n>0$ and $n\equiv 0 \, \mathrm{mod} \ p-1$; the crystalline condition is equivalent to $\psi^{\alg}$ being unramified at $\grp$ and $\bar\grp$.

Suppose in addition to (sqf), (sgn), and (ram) that
\begin{equation}
p\nmid N, \tag{flt}
\end{equation}
\begin{equation}
\text{$D$ is odd,}\tag{odd}
\end{equation}
\begin{equation}
\text{$L$ contains a large enough}\footnote{It is enough that $L$ contain the image of the Hilbert class field of $K$, though this is
not important here.}\text{ finite
extension of $\Qp$.}\tag{$L$-lrg}
\end{equation}
Let $S=\{\ell\mid pND\}$, and let $\O$ be the ring of integers of $L$.
There is an anticyclotomic $p$-adic $L$-function
$L^S_\grp(f)\in \O[\![\Gamma]\!]$ such that for $\psi\in\Sigma_\grp^{c}$
\begin{equation*}\label{acpadicL}
\psi(L^S_\grp(f)) =  C(f,K) w(f,\psi)e_\infty(f,\psi)e_p(f,\psi)\Omega_p(\psi)\frac{L^{S}(f,\psi^{\alg},1)}{\Omega(\psi^{\alg})},
\end{equation*}
where
\begin{itemize}
\item $L^S(f,\psi^{\alg},s) = L^S(V^\vee\otimes\psi,s)$ is the $L$-function with the Euler factors at the primes of $K$ dividing the primes in $S$ omitted;
\item $\Omega(\psi^{\alg}) =  (2\pi i)^{-2n-1}(2\pi)^{1-2n}\Omega^{4n}D^{n-1/2}$, with $\Omega$ a period of an elliptic curve 
$E_0$ with CM by the ring of integers of $K$;
\item $\Omega_p(\psi) = \Omega_p^{4n}$ with $\Omega_p$ a $p$-adic period for the elliptic curve $E_0$;
\item $e_\infty(f,\psi) = 4(2\pi)^{-2n-1}\Gamma(n)\Gamma(n+1)$;
\item $e_p(f,\psi) = \frac{(1-a_p(f)\psi^{\alg}(\bar\grp)p^{-1} + \psi^\alg(\bar\grp)^2p^{-1})}
{(1-a_p(f)\psi^{\alg}(\grp)p^{-1}+\psi^{\alg}(\grp)^2p^{-1})}$, where $a_p(f)$ is the
eigenvalue for the action of the Hecke operator $T_p$ on $f$;
\item $w(f,\psi) = \psi(W)$ for a unit $W\in \O[\![\Gamma]\!]^{\times}$ and
$w(f,1) = w_K$, the number of roots of unity in $K$;
\item $C(f,K)$ is some non-zero constant depending on $f$ and $K$.
\end{itemize}
For any $\psi$ we write $L_\grp^S(f,\psi)$ for
$\psi(L_\grp^S(f))$. 

If there exists a prime $\ell\mid N$ that is inert or ramified in $K$, then this $p$-adic $L$-function is constructed in \cite{EHLS} and in \cite{Wan-U31}.
It has been constructed more generally in \cite{BDP}, \cite{Hunter}, and \cite{MB11}. It can be related
to a specialization of a three-variable $L$-function constructed by Hida \cite{Hi-padicRankinL} (this is done in \cite{Wan-U31}).
To be precise, the functions constructed in \cite{BDP} and \cite{Hunter} are {\it a priori} only continuous on $\Gamma$. That they belong
to the Iwasawa algebra (which follows from the constructions in \cite{EHLS} and \cite{Wan-U31}) requires additional argument, essentially
extending the formulas to characters ramified at primes above $p$ as is done in \cite{MB11}.

For $\psi\in \Sigma_\grp^c$, let $\chi$ be the Hecke character of $K$ such that $\chi^{-1}= \psi^\alg|\Nm(\cdot)|_\Q$, where $\Nm$ is the norm map from $\A_K$ to $\A_\Q$ and 
$|\cdot|_\Q$ is the usual absolute value on $\A_\Q$.  Recall that under the assumptions
(sqf), (sgn), and (ram), $f$ and $K$ are as in Case I or Case II of Section \ref{Heegnerpoints}.
Then $L_\grp^S(f)$ is the
imprimitive variant of the $p$-adic $L$-function denoted $L_p(f,\chi)$ in \cite{BDP} (in Case I) and
in \cite{Hunter} (in Case II).  By `imprimitive variant' we mean that
the Euler factors at the primes in $S$ not dividing $p$ have been removed.
The set  of such $\chi$ for $\psi\in\Sigma_\grp^c$ is denoted $\Sigma_{cc}^{(2)}(\grn)$
and $\Sigma_{cc}^{(2)}(\grn^+)$, respectively, in \cite{BDP} and \cite{Hunter}, and
the interpolated values are given in terms of the values $L(f,\chi^{-1},0)$, which is just
$L(f,\psi^\alg,1)$. Also, still in the notation of \cite{BDP}
and \cite{Hunter}, $e_\infty(f,\psi)w_K(2\pi i)^{1+2n}=C(f,\chi,1)$, and
$w(f,\psi) = w_K w(f,\chi)^{-1}$. The constant we have denoted $C(f,K)$ is denoted
$\alpha(f,f_{\GL_2})^{-1}$ in \cite{Hunter} (wherein $f$ denotes a form on 
an indefinite quaternion algebra and $f_{\GL_2}$ is a suitably normalized Jacquet-Langlands lift of $f$
to $\GL_2$; the quaternion algebra depends on $K$). 

Among the important results in \cite{BDP2} and \cite{Hunter} is the following
expression for the value of $L^S_\grp(f,\psi)$ at the trivial character $\psi=1$, which
we call the BDP point, in terms of a formal log of the  point $P_K(f)$.
Note that the BDP point is not in $\Sigma^{c}_\grp$.

\begin{prop}[{\cite{BDP2}}, \cite{Hunter}]\label{BDPprop} 
Suppose $(\mathrm{sqf})$, $(\mathrm{sgn})$, $(\mathrm{ram})$, $(\mathrm{flt})$, 
and $(\mathrm{odd})$ hold. Then
\begin{equation*}\label{BDPformula}
L^S_\grp(f,1)\stackrel{.}{=} (\log_\omega P_K(f))^2.
\end{equation*}
\end{prop}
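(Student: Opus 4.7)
The plan is to derive the identity from the main results of \cite{BDP2} (for Case I of Section \ref{Heegnerpoints}) and of \cite{Hunter} (for Case II), which compute the value of $L^S_\grp(f)$ at the BDP point---the trivial anticyclotomic character---in terms of a $p$-adic Abel--Jacobi image of a Heegner divisor, and then to match that divisor with $P_K(f)$ under the conventions of this paper.

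First, I would unwind the construction of $L^S_\grp(f)$ near the BDP point. For $\psi\in\Sigma^c_\grp$ of weight $(-n,n)$, the value $\psi(L^S_\grp(f))$ is assembled in \cite{BDP} and \cite{Hunter} by pairing the image of $f$ under an iterated Katz-type differential operator $\theta^n$ against a CM theta series attached to $\psi$ and averaging over the Heegner orbit of CM points on $X$ under $\Gal(H/K)$. The interpolation formula for $\psi\in\Sigma^c_\grp$ recovers $L^S(f,\psi^{\alg},1)$ divided by the CM period $\Omega(\psi^\alg)$. The trivial character $\psi=1$ lies outside this interpolation range, and the formula extends by $p$-adic continuity: the operator $\theta^n$ collapses to the identity (up to a $p$-depletion that accounts for the Euler factor $e_p(f,1)$), the theta sum degenerates, and what remains is a $p$-adic modular-form expansion of $f$ evaluated at the Heegner points comprising the divisor $D_K^\xi$.

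Second, I would invoke the BDP/Brooks--Hunter identity itself: this limit value equals, up to an explicit nonzero algebraic constant built from local Euler factors, the CM period $\Omega$, and its $p$-adic analogue $\Omega_p$, the square of the pairing $\langle \omega_f,\mathrm{AJ}_\grp(D_K^\xi)\rangle$, where $\mathrm{AJ}_\grp$ is the $p$-adic Abel--Jacobi map for $J(X)$ at $\grp$ given by Coleman integration. Via the standard compatibility between Coleman integration, the Bloch--Kato logarithm on $H^1_f(K_\grp,V_pJ(X))$, and the formal logarithm on the Jacobian, this pairing equals $\log_{\omega_f}(Q_K^\xi)\in\overline K_\grp$. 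The square arises because the interpolated central values at $\psi\in\Sigma^c_\grp$ are, via the Waldspurger/Ichino formula underlying the construction, squares of Petersson-type pairings of $f$ with CM forms. Having done this on the Jacobian $J(X)$, I would transfer to $A_f$: since the Hodge divisor $\xi$ is Eisenstein, $\veps_f\cdot Q_K^\xi = Q_K(f)$; the Hecke-equivariance of the Bloch--Kato/formal logarithm gives $\log_{\omega_f}(Q_K^\xi) = \log_{\omega_f}(Q_K(f))$; and functoriality of the formal log along the quotient $\phi\colon J(X)\to A_f$ with $\phi^*\omega=\omega_f$ yields $\log_{\omega_f}Q_K(f) = \log_\omega\phi(Q_K(f)) = \log_\omega P_K(f)$.

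The main obstacle is bookkeeping of normalizations rather than devising a new idea. One must verify that the hypotheses (sqf), (sgn), (ram), (flt), and (odd) match those imposed in \cite{BDP2} and \cite{Hunter}---in particular, that the embedding $K\hookrightarrow B$ and the ideal $\grn^+$ in Case II, and the ideal $\grn$ in Case I, are compatible with the Heegner-orbit conventions of BDP/Brooks--Hunter---and that all the explicit factors appearing in the interpolation formula ($C(f,K)$, $w(f,1)=w_K$, $e_\infty(f,1)$, $e_p(f,1)$, and the periods $\Omega$ and $\Omega_p$) assemble into a single nonzero $p$-adic constant. Since every one of these is an explicit product of algebraic periods, local Euler factors, and Gamma values at positive integers, none can vanish; once the setups are aligned the `$\stackrel{.}{=}$' statement follows at once.
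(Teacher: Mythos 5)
Your proposal is correct and takes essentially the same route as the paper: the content of the proposition is that it is a citation of \cite[Thm.~3.12]{BDP2} (Case I) and \cite[Prop.~8.13]{Hunter} (Case II), followed by a reconciliation of the Heegner divisor appearing there with $P_K(f)$, using that the idempotent $\veps_f$ fixes $\omega_f$ so that $\log_{\omega_f}$ is unchanged by applying $\veps_f$ to the divisor, together with functoriality of the formal logarithm along $\phi\colon J(X)\to A_f$ with $\phi^*\omega=\omega_f$. Your third paragraph is precisely this reconciliation. The first two paragraphs, re-deriving the internal mechanism of the BDP/Brooks $p$-adic $L$-function and the $p$-adic Abel--Jacobi computation, are accurate as background but are not needed: the paper simply quotes those theorems as black boxes. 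One small mismatch: in Case I the paper (following \cite{BDP2}) uses the divisor $Q_K^0$ based at the cusp $\infty$ rather than $Q_K^\xi$ based at the Hodge divisor $\xi$; both are reconciled with $Q_K(f)$ by the same Eisenstein argument, so this does not affect the logic, but it is worth matching the cited source exactly when comparing normalizations.
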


\noindent  Recall that `$\stackrel{.}{=}$' means equality up to a non-zero constant
(which here also depends on the eigenvalues at the primes in $S$), and 
$\omega\in \Omega^1(A_f)\otimes M_f$ is a $1$-form such that $\phi^*\omega = \omega_f$
(so the action of $M_f$ on $\omega$ through its action on $\Omega^1(A_f)$ agrees with its scalar action),
and $\log_\omega:A_f(K_\grp)\otimes L\rightarrow \overline K_\grp$ is the
formal logarithm determined by $\omega$.

In Case I, Proposition \ref{BDPprop} is just \cite[Thm.~3.12]{BDP2}. The formula in 
{\it loc.~cit.}~has $\log_\omega P_K(f)$ replaced with $\log_\omega P_f$, where
$P_f=\phi(Q_K^0)$. But $\log_\omega P_K = \log _{\omega_f} Q_K^0$,
and, since $\veps_f\cdot \omega_f = \omega_f$, $\log_{\omega_f} Q_K^0 = 
\log_{\omega_f} \veps_f\cdot Q_K^0 = \log_{\omega_f} Q_K(f) = \log_\omega P_K(f)$,
whence the formula in this case.  In Case II, the proposition similarly follows from 
\cite[Prop.~8.13]{Hunter}.

To prove that $P_K(f)\neq 0$, and hence that $A_f(K)$ has positive rank,
it suffices to show that $\log_\omega P_K(f)\neq 0$ and, therefore,
to show that $L^S_\grp(f,1)\neq 0$.

\begin{coro}\label{Heegner-padic}
$P_K(f)\neq 0$ if and only if $L_\grp^S(f,1)\neq 0$.
\end{coro}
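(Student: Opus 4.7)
The plan is to derive Corollary \ref{Heegner-padic} as a direct consequence of Proposition \ref{BDPprop}. That proposition supplies the identity
$$L^S_\grp(f,1) \stackrel{.}{=} (\log_\omega P_K(f))^2$$
with a non-zero implicit constant. Since $\overline K_\grp$ is a field, squaring preserves non-vanishing, and so $L^S_\grp(f,1)\neq 0$ if and only if $\log_\omega P_K(f)\neq 0$. It therefore suffices to show that $\log_\omega P_K(f)=0$ if and only if $P_K(f)=0$ in $A_f(K)\otimes M_f$.

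For this, I would appeal to the standard property that the formal logarithm attached to $\omega$ converges on the formal group $\hat A_f(\grp)\subset A_f(K_\grp)$ and identifies $\hat A_f(\grp)$ with an open subgroup of $\overline K_\grp$. Since $\hat A_f(\grp)$ has finite index in $A_f(K_\grp)$, extending by linearity produces a $\Qp$-linear injection $(A_f(K_\grp)\otimes\Qp)/\mathrm{tors}\hookrightarrow \overline K_\grp$, and tensoring over $M_{f,p}$ with the field $L$ preserves injectivity. Because $M_f$ and $L$ have characteristic zero, any torsion in $A_f(K)$ or $A_f(K_\grp)$ is killed upon tensoring, and the inclusion $K\hookrightarrow K_\grp$ combined with the finite generation of $A_f(K)$ yields an injection $A_f(K)\otimes M_f\hookrightarrow A_f(K_\grp)\otimes L$. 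Composing shows that $\log_\omega: A_f(K)\otimes M_f\to \overline K_\grp$ has zero kernel exactly at zero, and combining with Proposition \ref{BDPprop} gives the corollary.

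There is no real technical obstacle here; everything reduces to unpacking what the implicit constant in $\stackrel{.}{=}$ guarantees and to recalling that the formal logarithm is injective modulo torsion, which is a standard consequence of the convergence of $\log_\omega$ near the identity of $A_f$ together with the non-degeneracy of $\omega$.
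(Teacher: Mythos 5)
Your reduction to Proposition \ref{BDPprop} is the right move, and the direction actually used later in the proof of Theorem \ref{thmB}, namely $L^S_\grp(f,1)\neq 0\Rightarrow P_K(f)\neq 0$, follows trivially from that formula since $\log_\omega 0=0$. The gap is in your argument for the converse implication $P_K(f)\neq 0\Rightarrow\log_\omega P_K(f)\neq 0$. The assertion that $\log_\omega$ ``identifies $\hat A_f(\grp)$ with an open subgroup of $\overline K_\grp$'' is false as soon as $\dim A_f=[M_f:\Q]>1$: the formal group $\hat A_f$ at $\grp$ has dimension $[M_f:\Q]$, and its full logarithm lands in $\mathrm{Lie}(A_f)\otimes_\Q K_\grp\cong K_\grp^{[M_f:\Q]}$, not in a one-dimensional space. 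The map $\log_\omega$ is only the single coordinate of the full logarithm cut out by $\omega$, i.e., the composite of $\log$, the $M_{f,p}$-linear pairing with $\omega$, and the projection $M_{f,p}=M_f\otimes\Qp\twoheadrightarrow M_{f,\lambda}\subset\overline K_\grp$; this last projection has a large kernel on $A_f(K_\grp)\otimes_{\Zp}\Qp$, so the $\Qp$-linear injection $(A_f(K_\grp)\otimes\Qp)/\mathrm{tors}\hookrightarrow\overline K_\grp$ you claim does not exist in general.

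What actually makes the converse work, and what your write-up omits, is that $P_K(f)=\phi(\veps_f\cdot Q_K^0)$ lies in the diagonal $\veps_f$-eigenspace of $A_f(K)\otimes M_f$, which is canonically identified with $A_f(K)\otimes\Q$ as an $M_f$-vector space, together with the observation that $\log$ followed by the $\omega$-pairing is an $M_{f,p}$-linear isomorphism $A_f(K_\grp)\otimes_{\Zp}\Qp\isoarrow M_{f,p}$. Under the rank constraint $\dim_{M_f}(A_f(K)\otimes\Q)\leq 1$ (which in the setting of Theorem \ref{thmB} is forced by hypothesis (e), since $A_f(K)\otimes\Qp\otimes_{M_{f,p}}L$ injects into the one-dimensional $H^1_f(K,V)$), any nonzero $x\in A_f(K)\otimes\Q$ is an $M_f^\times$-multiple of a generator, and a generator maps under the isomorphism to a unit in $M_{f,p}$, whose $\lambda$-component is therefore nonzero. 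The injectivity you invoke is thus not a property of $\log_\omega$ on $A_f(K_\grp)\otimes\Qp$ but of its restriction to the diagonal eigenspace under the rank bound, and your proposal needs to be amended to supply these two ingredients.
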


In the following section we explain some other consequences of $L_\grp^S(f,1)=0$.

\subsection{Some Iwasawa theory for $f$ and $K$}
\label{someIwasawa}
We continue to assume $p=\grp\bar\grp$ splits in $K$.

Let $\O$ be the ring of integers of $L$, and let $T\subset V$ be a $G_\Q$-stable $\O$-lattice.
Let $\Lambda=\O[\![\Gamma]\!]$. We view the projection $\Psi:G_K\twoheadrightarrow \Gamma$ as a continuous $\Lambda^\times$-valued character. Let
$\Lambda^*=\Hom_{cts}(\Lambda,\Qp/\Zp)$ be the Pontryagin dual of $\Lambda$, and let
$$
M = T\otimes_\O\Lambda^*(\Psi^{-1}),
$$
that is, the discrete $\Lambda$-module $T\otimes_\O\Lambda^*$ with continuous $G_K$
action $\rho_{f,\lambda}\otimes\Psi^{-1}$.
Let $S=\{\ell\mid pND\}$.
Let
\begin{equation*}\begin{split}
\Sel_\infty(f,K,S) 
= \ker\left\{H^1(G_{K,S},M)\stackrel{\res}\rightarrow H^1(I_\grp,M)\right\}.
\end{split}\end{equation*}
This is a discrete $\Lambda$-module, and its Pontryagin
dual
$$
X_\infty(f,K,S) = \Hom_{\Lambda}(\Sel_\infty(f,K,S),\Lambda^*)
$$
is a finitely generated $\Lambda$-module. Let $Ch_\Lambda(f,K,S)$ be its characteristic ideal over $\Lambda$; this is non-zero
if and only if $X_\infty(f,K,S)$ is a torsion $\Lambda$-module.

The Selmer group $\Sel_\infty(f,K,S)$ is essentially an imprimitive version of one of Greenberg's Selmer groups for the `big' Galois
module $M$, as we now explain. Let $\psi\in\Sigma_\grp^c$ with Hodge-Tate weights $(-n,n)$ (recall $n>0$). 
Then the induced representation $\Ind_{G_K}^{G_\Q} (V\otimes\psi^{-1})$ satisfies the Panchishkin condition as in \cite[\S3]{Gr-mot}:
its restriction to $G_\Qp$ is just $(V\otimes\psi^{-1})\oplus (V\otimes\psi^{-c})$, where the first summand is identified with $(V\otimes \psi^{-1})|_{G_{K_\grp}}$
and has non-negative Hodge-Tate weights $n$ and $n-1$ while the second summand is identified with 
$(V\otimes \psi^{-1})|_{G_{K_{\bar\grp}}}$ and 
has negative Hodge-Tate weights $-n$ and $-n-1$; the subspace with negative\footnote{Our conventions for Hodge-Tate weights are the negative of those in \cite{Gr-mot}.} Hodge-Tate weights has dimension equal to the
dimension of the $+1$-eigenspace for complex conjugation on the induced represention. Furthermore, 
if $\gamma\in \Gamma$ is a topological generator then $M[\gamma-\psi(\gamma)] \cong (T\otimes \psi^{-1})\otimes_\Zp \Qp/\Zp$.
That is, $M$ is the discrete Galois module associated with the deformation $T\otimes\Lambda(\Psi^{-1})$ of $T$ over $\Lambda$,
which contains a Zariski-dense set of specializations that satisfy the Panchishkin condition (namely, the specializations
under the maps $\gamma\mapsto\psi(\gamma)$ for $\psi\in \Sigma_\grp^c$).  This is a simple generalization to $G_K$-representations
of the situation considered in \cite[\S4]{Gr-mot}, and the Selmer group $\Sel_\infty(f,K,S)$ is the corresponding generalization 
of the Selmer groups defined in {\it loc.~cit.} Analogously to \cite[Conj.~4.1]{Gr-mot}, assuming also
\begin{equation}
\text{$\bar\rho_{f,\lambda}|_{G_K}$ is irreducible,}\tag{irr${}_K$}
\end{equation}
one then conjectures\footnote{The order of the Selmer group for $V\otimes \psi^{-1}$ is expected to be related to the
$L$-value $L(V^\vee\otimes \psi,1)$. This dictates which $p$-adic $L$-function should be identified with the characteristic ideal of 
$\Sel_\infty(f,K,S)$, namely, $L_\grp^S(f)$.} :
\begin{conj}\label{ACMC} 
$Ch_\Lambda(f,K,S) = (L_\grp^S(f))$ in $\Lambda\otimes_\Zp\Qp$.
\end{conj}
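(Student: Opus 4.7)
The plan is to prove the equality of ideals in $\Lambda\otimes_\Zp\Qp$ by establishing the two divisibilities separately, following the standard bifurcated strategy for Iwasawa main conjectures: an Eisenstein-congruence construction producing a lower bound on the Selmer group, and an Euler-system argument producing an upper bound. The heuristic justifying the conjecture is that at the Zariski-dense set of specializations $\gamma\mapsto\psi(\gamma)$ for $\psi\in\Sigma_\grp^c$, the Panchishkin condition on the specialized representation $V\otimes\psi^{-1}$ reduces the statement to the usual relation between a Selmer rank of $V\otimes\psi^{-1}$ and the order of vanishing of $L^S(V^\vee\otimes\psi,1)$.

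For the divisibility $(L_\grp^S(f))\subseteq Ch_\Lambda(f,K,S)$ --- i.e.\ that the Selmer side is at least as large as the analytic side --- I would follow the Eisenstein-congruence method of Skinner--Urban \cite{SU-MCGL} as extended to this anticyclotomic setting by Wan \cite{Wan-U31}. The key construction is a Hida family of Klingen-parabolic Eisenstein series on the unitary group $U(3,1)_{/K}$, obtained by pullback of a Siegel Eisenstein series on $U(3,3)$ via the doubling method; its Klingen constant term along $\Lambda$ interpolates $L_\grp^S(f)$ up to controlled local factors at primes in $S$. A lattice construction inside the ``big'' Galois representation carried by the ordinary Hida family of cuspforms congruent to this Eisenstein family then produces classes in $\Sel_\infty(f,K,S)$ (the strict condition at $\grp$ emerging from the Panchishkin filtration), whose image in $X_\infty(f,K,S)$ is large enough to force $L_\grp^S(f)\in Ch_\Lambda(f,K,S)$. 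The hypothesis (irr${}_K$) is essential here to ensure that the lattice inside the generically irreducible family yields genuine cohomology classes rather than torsion relations.

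For the reverse divisibility $Ch_\Lambda(f,K,S)\subseteq (L_\grp^S(f))$, I would feed an Euler system into the standard Kolyvagin-type machinery over $\Lambda$. In the anticyclotomic setting the natural input is the Heegner-point Euler system of Bertolini--Darmon, whose non-triviality along the anticyclotomic tower is guaranteed by the theorem of Cornut--Vatsal; Kolyvagin derivatives then bound $X_\infty(f,K,S)$ from above by the characteristic power series of $L_\grp^S(f)$.

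The main obstacle is the Eisenstein direction. One must interpolate the Eisenstein family over the whole Iwasawa algebra $\Lambda$ (not merely as a continuous function on crystalline characters in $\Sigma_\grp^c$), compute the Klingen constant term modulo congruences with controlled local factors at primes dividing $(D,N)$, and verify the local conditions on the extracted cohomology classes. The hypothesis (ram) is precisely what makes the local factors at ramified primes dividing $(D,N)$ tractable in the doubling method; (flt) ensures that the representation is crystalline at $p$ so the Panchishkin filtration is well-behaved; and (irr${}_K$) governs the lattice construction. Under the full hypotheses (a)--(d) of Theorem \ref{thmB}, these steps are carried out in \cite{Wan-U31} and suffice to establish Conjecture \ref{ACMC} in the generality needed for our application.
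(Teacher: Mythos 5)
The statement you were asked about is a \emph{conjecture} in the paper, not a theorem; the paper offers no proof of it and does not claim one. It is presented as the natural generalization of Greenberg's Main Conjecture \cite[Conj.~4.1]{Gr-mot} to the $G_K$-setting, motivated by the Panchishkin-condition heuristic you recount at the start of your proposal, and that heuristic discussion is all the paper gives. What the paper actually uses is only the one-sided inclusion $Ch_\Lambda(f,K,S)\subseteq (L^S_\grp(f))$ (Proposition \ref{Wanprop}), which it deduces from \cite[Thm.~1.1(2)]{Wan-U31}. So a ``proof'' of the full equality is not what is required here, and your closing sentence --- that under hypotheses (a)--(d) ``these steps \ldots suffice to establish Conjecture \ref{ACMC}'' --- overstates what is known: Wan's work gives one divisibility, not the equality.

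There are also two substantive errors in your sketch. First, you have swapped the two divisibilities. Since $Ch_\Lambda(f,K,S)$ is the characteristic ideal of the dual Selmer module $X_\infty$, the inclusion $Ch_\Lambda(f,K,S)\subseteq (L^S_\grp(f))$ says that $L^S_\grp(f)$ \emph{divides} a generator of the characteristic ideal, i.e.\ the Selmer group is at least as large as the $L$-function predicts; this is precisely what the Eisenstein-congruence/lattice construction of Skinner--Urban and Wan yields. The reverse inclusion $(L^S_\grp(f))\subseteq Ch_\Lambda(f,K,S)$ is the \emph{upper bound} on the Selmer group and would be the Euler-system direction. Your proposal attaches each method to the wrong inclusion.

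Second, the Euler-system direction is not available in the form you describe, and this is exactly why the statement remains a conjecture. The Heegner-point Euler system of Bertolini--Darmon, together with Cornut--Vatsal nonvanishing, controls an anticyclotomic Selmer group with the \emph{ordinary/Greenberg} local condition at both primes above $p$; it is naturally paired with a $p$-adic $L$-function of Heegner-point type, not with the BDP-type $L$-function $L^S_\grp(f)$, whose associated Selmer group $\Sel_\infty(f,K,S)$ has the unbalanced (``strict at $\grp$, relaxed at $\bar\grp$'') local condition. Translating Heegner-point Kolyvagin bounds into an upper bound by $(L^S_\grp(f))$ for \emph{this} Selmer group requires a nontrivial comparison (essentially an explicit reciprocity law of BDP type valid $\Lambda$-adically), and asserting it as a routine application of Kolyvagin machinery is a genuine gap. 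For the purposes of this paper none of that is needed: Proposition \ref{Heegner-Selmer} only requires the single inclusion $Ch_\Lambda(f,K,S)\subseteq (L^S_\grp(f))$, and that is all Proposition \ref{Wanprop} supplies.
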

\noindent This is the Iwasawa--Greenberg Main Conjecture for $M$.

Significant progress toward this conjecture has been made by X. Wan \cite{Wan-U31}, following the methods of \cite{SU-MCGL}. 
Suppose in addition to (sqf), (flt), ($L$-lrg), and (irr${}_K$) that (sgn) and (odd) hold and that
\begin{equation}
p\geq 5,\tag{big}
\end{equation}
\begin{equation}
\text{$f$ is ordinary with respect to $\lambda$,}\tag{ord}
\end{equation}
\begin{equation}
\text{both $2$ and $p$ split in $K$,}\tag{spl}
\end{equation}
\begin{equation}
\text{$\bar\rho_{f,\lambda}$ is ramified at some odd prime $\ell|N$ that is inert or ramified in $K$}.\tag{res}
\end{equation}
Then it is proved in \cite{Wan-U31} that:
\begin{prop}[\cite{Wan-U31}]\label{Wanprop} Under the above assumptions,
$$ Ch_\Lambda(f,K,S) \subseteq (L^S_\grp(f))$$ in $\Lambda\otimes_\Zp\Qp$.
\end{prop}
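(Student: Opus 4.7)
The plan is to follow the Eisenstein congruence strategy of \cite{SU-MCGL}, adapted by Wan to the quasi-split unitary group $U(3,1)$ attached to $K$. The divisibility $Ch_\Lambda(f,K,S)\subseteq (L^S_\grp(f))$ in $\Lambda\otimes\Qp$ means that for each height-one prime $P$ of $\Lambda\otimes\Qp$, the $P$-adic valuation of $L_\grp^S(f)$ is at most that of $Ch_\Lambda(f,K,S)$, so it suffices to construct enough classes in $\Sel_\infty(f,K,S)$, equivalently, to show that $X_\infty(f,K,S)$ is large enough, to account for the zeros of $L^S_\grp(f)$.

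First, I would build a $\Lambda$-adic family of Klingen-type Eisenstein series on $U(3,1)/\Q$, induced from parabolic data that combine (i) the Hida family deforming $f$ over the ordinary locus, available by (ord), with (ii) anticyclotomic Hecke characters of $K$ parametrized by $\Gamma$. A careful choice of local sections at the primes in $S$, matching the special and unramified-twisted-special types of $\pi_\ell$ for $\ell \mid N$ (where (sqf) and (res) enter), should make the constant term of this family along the Klingen parabolic equal $L^S_\grp(f)$ times a $\Lambda$-adic unit; the archimedean and $p$-adic periods are normalized using (odd), (spl), (big), and ($L$-lrg).

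Second, I would apply the Klingen--Eisenstein ideal machinery of \cite{SU-MCGL}: if the Eisenstein family is itself not divisible by $L_\grp^S(f)$ yet its constant term is, then there must exist a genuine cuspidal Hida family on $U(3,1)$ whose Hecke eigensystem is congruent to the Eisenstein one modulo $L_\grp^S(f)$. The required non-divisibility of the Eisenstein family would be obtained by exhibiting a Fourier--Jacobi coefficient that is a $\Lambda$-adic unit, coming down to a non-vanishing computation at a classical specialization.

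Finally, Ribet's lattice argument applied to the Galois representation attached to the cuspidal family yields the desired Selmer classes. Its residual semisimplification contains $\bar\rho_{f,\lambda}|_{G_K}$ as a summand, and (irr${}_K$) together with (res) allows one to extract a non-split extension by $T\otimes\Lambda(\Psi^{-1})$, producing classes in $H^1(G_{K,S},M)$. Ordinarity of $f$ at $p$ together with (spl) ensures that the Hodge filtration at $\grp$ and $\bar\grp$ coming from Hida theory translates into the Greenberg-type local condition defining $\Sel_\infty(f,K,S)$, and a Fitting-ideal bookkeeping then gives the divisibility. The principal obstacle is the non-degeneracy of the Eisenstein family together with the delicate local computation of the constant term at primes dividing $N$ that are inert or ramified in $K$; hypothesis (res) is imposed precisely to make this local analysis go through.
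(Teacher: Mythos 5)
Your proposal sketches the Eisenstein-ideal strategy that underlies Wan's paper \cite{Wan-U31}, whereas the paper's own proof of this proposition is a much more modest deduction: it does not re-prove the divisibility, it \emph{specializes} a two-variable version already established by Wan. Concretely, the paper invokes part (2) of \cite[Thm.~1.1]{Wan-U31}, which gives the analogous inclusion of characteristic ideals over the two-variable algebra $\O[\![\Gamma_\CK]\!]$ attached to the full $\Zp^2$-extension of $K$. One fixes an auxiliary $\psi_0\in\Sigma_\grp^c$ of sufficiently large weight and a character $\xi=\psi^{\alg}\omega$ so that Wan's hypotheses are satisfied, composes the projection $\O[\![\Gamma_\CK]\!]\twoheadrightarrow\Lambda$ with the untwisting $\gamma\mapsto\psi_0(\gamma)^{-1}\gamma$, checks that under this map Wan's two-variable $p$-adic $L$-function and dual Selmer module specialize to $L_\grp^S(f)$ and $X_\infty(f,K,S)$, and then transports the divisibility through the specialization using that both rings are UFDs with principal characteristic ideals (cf.\ \cite[Cor.~3.8]{SU-MCGL}). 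None of this machinery appears in your write-up.

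What you wrote is instead a table-of-contents summary of the \emph{contents} of \cite{Wan-U31}: a Klingen Eisenstein family on $U(3,1)$, a constant-term identity, Fourier--Jacobi non-vanishing, and a lattice construction of Selmer classes. That is indeed the right global strategy for proving the underlying theorem, so the sketch is not wrong in spirit. But it is aimed at the wrong target---the proposition is stated as a consequence of \cite{Wan-U31}, and what needs to be supplied is the deduction, not a re-derivation---and it leaves every genuinely difficult step at the level of a slogan: the constant-term computation (whose output is not $L_\grp^S(f)$ alone but includes abelian $L$-factors that must be shown to be units), the Fourier--Jacobi non-vanishing at the primes in $S$ where (res) actually intervenes, the lattice argument producing classes satisfying the precise local condition at $\grp$, and the identification of the resulting Greenberg Selmer group with $\Sel_\infty(f,K,S)$. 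The paper's citation-plus-specialization route is shorter and verifiable; your route would require carrying out the full contents of Wan's paper before one could even begin, and it also silently skips the reduction from the two-variable Iwasawa algebra to the anticyclotomic $\Lambda$, which is exactly the step the paper is careful about.
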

\noindent This proposition is not explicitly given in \cite{Wan-U31}, however it follows easily from part (2) of 
\cite[Thm.~1.1]{Wan-U31}, as we explain. 

Let $\pi$ be the cuspidal automorphic representation of $\GL_2(\A_\Q)$
such that $L(\pi,s-1/2) = L(f,s)$ and let $f_0$ be the ordinary stabilization of the newform $f$. 
We fix a character $\psi_0\in \Sigma_\grp^c$ with Hodge-Tate weights
$(-n_0,n_0)$ with $n_0>6$.  Let $\psi^\alg$ be the associated algebraic Hecke character and let
$\xi = \psi^\alg\omega$, where $\omega$ is the finite order Hecke character of $K$ associated
with the Teichm\"uller character (the lift of the mod $p$ reduction of the $p$-adic cyclotomic character). 
Then $\pi$, $f_0$, and $\xi$ satisfy the hypotheses of part (2) of \cite[Thm.~1.1]{Wan-U31} (in the notation of
{\it loc.~cit.}, $\Gamma_\CK$ is the Galois group of the composite of all $\Zp$-extensions of $\CK=K$, so
$\Gamma_\CK \cong \Z_p^2$ and $\Gamma$ is a natural quotient of $\Gamma_\CK$). It remains to explain
how the conclusion of that part of the theorem (or really its proof) implies the above proposition.
The $p$-adic $L$-function $\CL_{f_0,\CK,\xi}^{\Sigma,Hida} \in \O[\![\Gamma_\CK]\!]$ with $\Sigma = S$ from
\cite[\S6]{Wan-U31} is a two-variable extension
of the $p$-adic $L$-function considered herein: under the composition
$$
\O[\![\Gamma_\CK]\!]\stackrel{\Gamma_\CK\twoheadrightarrow\Gamma}{\longrightarrow} \Lambda \stackrel{\gamma\mapsto \psi_0(\gamma)^{-1}\gamma}{\longrightarrow} \Lambda,
$$
$\CL_{f_0,\CK,\xi}^{\Sigma,Hida}$ maps to $L_\grp^S(f)$. Similarly, under the base change from $\O[\![\Gamma_\CK]\!]$ to
$\Lambda$  given by this map (that is, tensoring with $\Lambda$ over $\O[\![\Gamma_\CK]\!]$), 
the Selmer group denoted $\Sel^\Sigma_{f_0,\CK,\xi}$ in \cite[\S2.11]{Wan-U31} becomes
$\Sel_\infty(f,K,S)$ and $X^\Sigma_{f_0,\CK,\xi}$ becomes $X_\infty(f,K,S)$. 
The proof of part (2) of \cite[Thm.~1.1]{Wan-U31} involves showing that for $\Sigma = S$, 
the characteristic ideal $\mathrm{char}_{\O[\![\Gamma_\CK]\!]}X^\Sigma_{f_0,\CK,\xi}$ is contained in $(\CL^{\Sigma,Hida}_{f_0,\CK,\xi})$
in $\O[\![\Gamma_\CK]\!]\otimes_\O L$.
The corresponding inclusion $Ch_\Lambda(f,K,S) \subset (L_\grp^S(f))$ in $\Lambda\otimes_\Zp\Qp$ then follows easily, 
using that the rings $\O[\![\Gamma_\CK]\!]$ and $\Lambda$ are unique factorization ideals and the various 
characteristic ideals are principal (cf.~\cite[Cor.~3.8]{SU-MCGL}).

We now explain a simple consequence of $L_\grp^S(f,1)=0$. Let $W = M[\gamma-1] = T\otimes_\Zp\Qp/\Zp$ and let
\begin{equation*}\begin{split}
\Sel_\grp(f,K,S) 
= \ker\left\{H^1(G_{K,S},W) \stackrel{\res}{\rightarrow} H^1(I_\grp,W)\right\}.
\end{split}
\end{equation*}
Let also
$$
X_\grp(f,K,S) = \Hom_\Zp(\Sel_\grp(f,K,S),\Qp/\Zp).
$$
Assuming (irr${}_K$), 
$$
H^1(G_{K,S},W)=H^1(G_{K,S},M[\gamma-1])= H^1(G_{K,S},M)[\gamma-1].
$$
It then follows from the exactness of the bottom row of the commutative diagram
$$
\begin{CD}
{} @. H^1(G_{K,S},W) @= H^1(G_{K,S},M)[\gamma-1] \\
{} @. @VresVV @VresVV \\
(M^{I_\grp}/(\gamma-1)M^{I_\grp})^{G_{K_\grp}} @>>>
H^1(I_\grp,W)^{G_{K_\grp}} @>>> H^1(I_\grp,M)[\gamma-1]^{G_{K_\grp}}
\end{CD}
$$
\noindent (the bottom row comes from the long exact cohomology sequence associated with 
the short exact sequence $0\rightarrow W \rightarrow M \stackrel{\times (\gamma-1)}{ \rightarrow} M \rightarrow 0$) that
$\Sel_\grp(f,K,S)$ is contained
in $\Sel_\infty(f,K,S)[\gamma-1]$ with finite index, and therefore
\begin{equation}\label{SelmerK-iso}
\# X_\grp(f,K,S) = \infty \iff \#(X_\infty(f,K,S)/(\gamma-1)X_\infty(f,K,S)) = \infty.
\end{equation}
Suppose $L_\grp^S(f,1)=0$. This means
$L_\grp^S(f) \in (\gamma-1)$, so if the assumptions of Proposition \ref{Wanprop} also hold, then $Ch_\Lambda(f,K,S)\subset (\gamma-1)$. 
By basic properties of characteristic ideals, this last inclusion implies $\#(X_\infty(f,K,S)/(\gamma-1)X_\infty(f,K,S)) = \infty$.
Combining this with \eqref{SelmerK-iso} we conclude
\begin{equation}\label{keyeq1}
L_\grp^S(f,1) = 0 \implies \# X_\grp(f,K,S) = \infty.
\end{equation}
As $H^1(K_\grp,V)\hookrightarrow H^1(I_\grp,V)$,
we have $H^1_\grp(K,V) = \ker\{H^1(G_{K,S},V)\rightarrow H^1(I_\grp,V)\}$,
from which it follows easily that the image of $H^1_\grp(K,V)$ in $H^1(G_{K,S},W)$ is the maximal divisible subgroup of $\Sel_\grp(f,K,S)$.
Combining this observation with \eqref{SelmerK-iso} and \eqref{keyeq1}, we conclude that:

\begin{prop}\label{Heegner-Selmer} If the hypotheses of Proposition \ref{Wanprop}
hold, then 
$$
L^S_\grp(f,1)=0 \ \implies \ H^1_\grp(K,V)\neq 0.
$$
\end{prop}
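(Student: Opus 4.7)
The plan is to chain together the divisibility of Proposition \ref{Wanprop} with the control-theoretic formalism already set up in the preceding paragraph. First I would invoke Proposition \ref{Wanprop} to obtain the inclusion $Ch_\Lambda(f,K,S) \subseteq (L^S_\grp(f))$ in $\Lambda\otimes_\Zp\Qp$. The hypothesis $L^S_\grp(f,1)=0$ means $L^S_\grp(f)$ lies in the augmentation ideal $(\gamma-1)\Lambda$, so combining the two inclusions gives $Ch_\Lambda(f,K,S) \subset (\gamma-1)\Lambda$ in $\Lambda\otimes_\Zp\Qp$.

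Next, I would apply the standard fact about characteristic ideals over the two-dimensional regular ring $\Lambda$: if the characteristic ideal of a finitely generated torsion $\Lambda$-module is divisible by $\gamma-1$, then its $(\gamma-1)$-coinvariants are infinite. Applied to $X_\infty(f,K,S)$, this yields $\#\bigl(X_\infty(f,K,S)/(\gamma-1)X_\infty(f,K,S)\bigr)=\infty$. By the equivalence \eqref{SelmerK-iso} already recorded in the text, this is the same as $\#X_\grp(f,K,S)=\infty$; dually, $\Sel_\grp(f,K,S)$ has positive $\Zp$-corank.

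Finally, the text has already identified the image of $H^1_\grp(K,V)$ in $H^1(G_{K,S},W)$ with the maximal divisible subgroup of $\Sel_\grp(f,K,S)$. A discrete $\Zp$-module of positive corank has non-trivial maximal divisible subgroup, so this image is non-zero, whence $H^1_\grp(K,V)\neq 0$.

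The main obstacle, and really the only step requiring care beyond quoting results already in hand, is the passage from the characteristic-ideal divisibility (which is only known after inverting $p$) to the infinitude of $(\gamma-1)$-coinvariants. This will follow from the structure theorem for finitely generated torsion $\Lambda$-modules up to pseudo-isomorphism together with the fact that $\Lambda$ is a two-dimensional UFD, so that divisibility by $\gamma-1$ in $\Lambda\otimes_\Zp\Qp$ forces the pseudo-isomorphism class of $X_\infty(f,K,S)$ to contain a summand of the form $\Lambda/(\gamma-1)^n$ with $n\geq 1$, whose $(\gamma-1)$-coinvariants are a copy of $\O$.
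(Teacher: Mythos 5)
Your proposal follows the same route as the paper's own proof: pass from the vanishing of $L^S_\grp(f,1)$ to $Ch_\Lambda(f,K,S)\subset(\gamma-1)$ via Proposition \ref{Wanprop}, deduce that the $(\gamma-1)$-coinvariants of $X_\infty(f,K,S)$ are infinite, transfer this through the control statement \eqref{SelmerK-iso} to get $\#X_\grp(f,K,S)=\infty$, and finally identify the image of $H^1_\grp(K,V)$ with the maximal divisible subgroup of $\Sel_\grp(f,K,S)$. Your closing paragraph merely spells out what the paper compresses into the phrase ``by basic properties of characteristic ideals,'' so the two arguments are substantively identical.
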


\subsection{An observation about hypothesis (irr${}_K$)}
We include a simple lemma on the irreducibility of $\bar\rho_{f,\lambda}|_{G_K}$. Consider the hypothesis
\begin{equation}
\text{$\bar\rho_{f,\lambda}$ is an irreducible $G_\Q$-representation.}
\tag{irr}
\end{equation}
\begin{lem}\label{irr-lem} 
If {\rm (irr)} and {\rm (res)} hold, then so does {\rm (irr${}_K$)}.
\end{lem}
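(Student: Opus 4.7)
The plan is to argue by contradiction via Clifford theory. Assume $\bar\rho_{f,\lambda}|_{G_K}$ is reducible. Since (irr) makes $\bar\rho_{f,\lambda}$ an irreducible $G_\Q$-representation while $G_K$ is a normal subgroup of $G_\Q$ of index two, Clifford theory forces $\bar\rho_{f,\lambda}\cong \Ind_{G_K}^{G_\Q}\chi$ for some character $\chi\colon G_K\to \overline{\F}_\lambda^\times$ with $\chi\neq \chi^c$, and $\bar\rho_{f,\lambda}|_{G_K}\cong\chi\oplus\chi^c$, where $\chi^c(g)=\chi(cgc^{-1})$. (The isotypic possibility $\chi=\chi^c$ is ruled out because it would force $\bar\rho_{f,\lambda}\otimes\chi^{-1}$ to be trivial on $G_K$, hence reducible on $G_\Q$, contradicting (irr).)

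Next, use (res) to fix an odd prime $\ell\mid N$ that is inert or ramified in $K$ and at which $\bar\rho_{f,\lambda}$ is ramified. Since $N$ is squarefree (a standing hypothesis of Theorem \ref{thmB}) and $\ell\mid N$, the local component $\pi_\ell$ is a (possibly unramified quadratic) twist of the special representation, so local-global compatibility guarantees that $\rho_{f,\lambda}|_{I_\ell}$ is a nontrivial unipotent representation. The ramification of $\bar\rho_{f,\lambda}$ at $\ell$ then forces $\bar\rho_{f,\lambda}(I_\ell)$ to be a nontrivial unipotent subgroup of $\GL_2(\overline{\F}_\lambda)$; in particular $\bar\rho_{f,\lambda}|_{I_\ell}$ is not semisimple and $\trace \bar\rho_{f,\lambda}(g)=2$ for every $g\in I_\ell$.

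The contradiction now comes from reading $\bar\rho_{f,\lambda}|_{I_\ell}$ off the induced structure. Let $\grl$ be the unique prime of $K$ above $\ell$. If $\ell$ is inert, then $K_\grl/\Q_\ell$ is unramified, so $I_\ell=I_\grl\subset G_K$, whence $\bar\rho_{f,\lambda}|_{I_\ell}=\chi|_{I_\ell}\oplus\chi^c|_{I_\ell}$ is semisimple, contradicting the non-semisimplicity just established. If $\ell$ is ramified, then $I_\grl$ has index $2$ in $I_\ell$; choose $\sigma\in I_\ell\setminus I_\grl$. The standard character formula for induced representations, combined with the normality of $G_K$ in $G_\Q$, gives $\trace \bar\rho_{f,\lambda}(\sigma)=0$, which is incompatible with $\trace \bar\rho_{f,\lambda}(\sigma)=2$ since $p$ is odd.

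The only step requiring genuine input is the claim that $\bar\rho_{f,\lambda}|_{I_\ell}$ is nontrivial unipotent: this combines the Steinberg/twist-of-Steinberg shape of $\pi_\ell$ (available because $N$ is squarefree) with hypothesis (res), which ensures that the (necessarily nilpotent) monodromy of $\rho_{f,\lambda}$ at $\ell$ survives reduction modulo $\lambda$. Everything else is a routine Clifford/character manipulation, so no serious obstacle should arise.
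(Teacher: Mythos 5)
Your proof is correct and reaches the contradiction by a genuinely different route than the paper's. Both arguments rest on the same input: since $\ell\mid\mid N$ and $\pi_\ell$ is a twist of Steinberg, $\rho_{f,\lambda}(I_\ell)$ is unipotent, (res) keeps $\bar\rho_{f,\lambda}(I_\ell)$ nontrivial, and $\tau_\ell^2\in I_\grl\subset G_K$ puts a nontrivial unipotent of $p$-power order in the image of $G_K$. The paper then argues structurally inside $\GL_2(\bar k)$ (where $k$ is the residue field of $L$): reducibility of $\bar\rho_{f,\lambda}|_{G_K}$ would put its image in a torus or a Borel; the torus is excluded by the unipotent, and in the Borel case the unipotent determines the stable line uniquely, so the normalizing image of $G_\Q$ is forced into the same Borel, contradicting (irr) --- with no inert/ramified case split. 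You instead invoke Clifford theory to write $\bar\rho_{f,\lambda}\cong\Ind_{G_K}^{G_\Q}\chi$ with $\chi\neq\chi^c$ and then kill it with the induced-character formula (ramified case) or the semisimplicity of $(\chi\oplus\chi^c)|_{I_\ell}$ (inert case). This is a perfectly valid and slightly more computational route; you could in fact collapse your two cases, since a nontrivial unipotent already lies in $\bar\rho_{f,\lambda}(I_\grl)\subset\bar\rho_{f,\lambda}(G_K)$, which your decomposition $\chi\oplus\chi^c$ forces to be diagonalizable. One small imprecision worth tightening: the parenthetical ruling out $\chi=\chi^c$ refers to $\bar\rho_{f,\lambda}\otimes\chi^{-1}$, but $\chi$ is a character only of $G_K$; one should first extend $\chi$ to $G_\Q$ (possible since $H^2(\Z/2,\bar k^\times)=0$), or, more simply, observe that $\chi=\chi^c$ would make $G_K$ act by scalars and hence $\bar\rho_{f,\lambda}(G_\Q)$ abelian, contradicting (irr) directly.
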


\begin{proof}
Suppose (irr) and (res) hold. Let $\ell\mid N$ be a prime at which $\bar\rho_{f,\lambda}$ is ramified. 
As $\ell\mid\mid N$, the action of $I_\ell$ on $V$ is unipotent and factors through tame inertia. In particular,
if $\tau_\ell$ is a topological generator of tame inertia at $q$, then $\rho_{f,\lambda}(\tau_\ell)$ is unipotent
\footnote{This follows from the local-global compatibility satisfied by $\rho_{f,\lambda}|_{G_{\Q_\ell}}$.},
hence so, too, is $\bar\rho_{f,\lambda}(\tau_\ell)$; the latter is a unipotent element of order a power of $p$.
As $\tau_\ell^2\in G_K$, it follows that the image of $\bar\rho_{f,\lambda}|_{G_K}$ contains a unipotent element
of order a power of $p$. Let $k$ be the residue field of $L$. Suppose now that $\bar\rho_{f,\lambda}$ is reducible over $\bar k$. 
Then the image of $\bar\rho_{f,\lambda}|_{G_K}$ is contained in either a torus (split or non-split) or a Borel of $\GL_2(k)$. 
The first possibility is ruled out as the image contains a unipotent element of order a power of $p$; the image of
$\bar\rho_{f,\lambda}|_{G_K}$  is therefore contained in a Borel. 
But as the image of $G_K$ is normalized by the image of $G_\Q$, it follows easily that the image of $G_\Q$ is also 
contained in the Borel, contradicting (irr). 
\end{proof}

\subsection{Finishing the proof of Theorem \ref{thmB}}

Let $f$, $p$, and $K$ be as in Theorem \ref{thmB}.

We begin by noting that (sqf), (odd), and (big) hold by hypothesis.
Hypothesis (a) of Theorem \ref{thmB} ensures that (flt) and (ord) hold. Hypothesis (b) is just (irr) and (res) and so
(irr${}_K$) holds by Lemma \ref{irr-lem}, and hypothesis (c) is
just (spl). Hypothesis (e) then implies, by Lemma \ref{signlemma}, that (sgn) holds.
So Propositions \ref{Wanprop} and \ref{Heegner-Selmer} apply: if $H^1_\grp(K,V)=0$
then $L_\grp^S(f,1)\neq 0$. Hypothesis (e) also implies, by Lemma \ref{keylemma}, that
$H^1_\grp(K,V)=0$. Since hypothesis (d) is just (ram),
it follows from Corollary \ref{Heegner-padic} that $P_K(f)\neq 0$.

The rank of $A_f(K)$ is equal to $[M_f:\Q]$ times the $M_{f,p}$-rank of the free $
M_{f,p}$-space $A_f(K)\otimes\Qp$, which, by the injection
$A_f(K)\otimes\Qp\hookrightarrow H^1_f(K,V_pA_f)$,
is at most the $L$-dimension of $H^1(K,V_pA_f)\otimes_{M_{f,p}}L = H^1_f(K,V)$. The latter has
$L$-dimension $1$
by hypothesis (e). Since $P_K(f)\neq 0$, so $A_f(K)\otimes\Q\neq0$, it follows that $\rank_\Z A_f(K) = [M_f:\Q]$. It also follows, by Proposition \ref{YZZprop} and Corollary \ref{GZcoro}, that
$\ord_{s=1}L(f,K,s)=1$ and $\ord_{s=1}L(A_f/K,s)=[M_f:\Q]$.

To conclude that $\Sha(A_f/K)$ is finite, we first observe that it is enough to show that
both $\Sha(A_f)$ and $\Sha(A_f^D)$ are finite. To show these are finite we begin by noting that
since $L(f,K,s)=L(f,s)L(f\otimes\chi_D,s)$ has order $1$ at $s=1$,
one of $L(f,s)$ and $L(f\otimes\chi_D,s)$ has order $1$ at $s=1$ and the other has order $0$.
The finiteness of the Tate--Shafarevich groups then just follows from the work of
Gross, Zagier, and Kolyvagin, as cited in the introduction.

This completes the proof of Theorem B.

\begin{rmk}\label{rmksonhyps} We indicate how the various hypotheses
of Theorem \ref{thmB} intervene in its proof and make some additional remarks on the theorem and its proof.
\begin{itemize}
\item[(i)] The requirement that $N$ be squarefree is made in \cite{Wan-U31} and
in \cite{Hunter} (that $N$ be squarefree at those primes dividing
$(D,N)$ is also required in \cite{BDP} and \cite{BDP2}). 
\item[(ii)] The hypothesis that $p\geq 5$ comes from \cite{Wan-U31}, where it is
imposed for convenience.
\item[(iii)] The hypothesis that $D$ be odd is made in \cite{BDP} and \cite{Hunter} 
as well as \cite{Wan-U31} (in which 2 is also required to split in $K$) and stems 
from some gaps in our knowledge of the theta correspondence 
for local fields of residue characteristic 2.
\item[(iv)] The assumption that $p\nmid N$ intervenes most crucially in \cite{BDP}, \cite{BDP2}, 
and \cite{Hunter}. We have 
also used it to simplify our use of the parity conjecture \cite{Ne-parity} 
for Selmer groups of modular forms (to verify (sgn)).
\item[(v)] The hypothesis that $f$ is ordinary at some $\lambda\mid p$ is only needed to use
the results of \cite{Wan-U31} and, again, in our appeal to \cite{Ne-parity}. In particular,
$f$ being ordinary is not crucial for the methods employed herein: if a version of
Proposition \ref{Wanprop} were available\footnote{Such a result has been announced in a recent preprint of Wan.} 
for forms with finite non-critical slope,
for example, then all the results of this paper would hold in that case, provided the corresponding
Selmer parity result was also known (this is known for elliptic curves with supersingular reduction).
\item[(vi)] The hypothesis that $\bar\rho_{f,\lambda}$ is irreducible is required
in \cite{Wan-U31}, as is the hypothesis that $\bar\rho_{f,\lambda}$ is ramified
at an odd prime $\ell\neq p$ that is either inert or ramified in $K$. The latter ensures,
among other things, that $\pi$ has a transfer to a definite unitary group $U(2)$ (that is ramified at $\ell$) defined using $K$; 
the primary results in \cite{Wan-U31} relate the $p$-adic $L$-function $L_\grp^S(f)$ to the
index of an Eisenstein ideal on $U(3,1)$ coming from an Eisenstein series induced from this cuspform on $U(2)$.
\item[(vii)] The requirement that $p$ split in $K$ is needed to use the results
in \cite{BDP}, \cite{Hunter}, and \cite{Wan-U31}. It also comes into the Galois
arguments, especially the proof of Lemma \ref{keylemma}. 
It is likely one of the most difficult hypotheses to relax in the methods employed in this paper. 
\item[(viii)] The hypotheses in (d) for when $(D,N)\neq 1$ are needed to appeal to the
results of \cite{BDP}, which requires that $\eps_\ell(f,K)=+1$ for all primes 
$\ell\mid (N,D)$.
\item[(ix)] The hypothesis that $H^1_f(K,V)$ is one-dimensional is used to know
beforehand that the root number $\eps(f,K)$ is $-1$; that is, (sgn) holds. This is
required for the results in \cite{YZZ}, \cite{BDP}, \cite{BDP2}, and \cite{Hunter}.
\item[(x)] The injectivity of the restriction map 
$H_f^1(K,V)\stackrel{\res}{\rightarrow} \prod_{\grl\mid p} H^1(K_\grl,V)$ is needed to 
ensure that $H^1_\grp(K,V)=0$. Conjecturally, it should be enough that $H^1_f(K,V)$ is
one-dimensional, and then the injectivity would follow from the conclusion that
$A_f(K)\neq 0$. In \cite{Z-Koly} Wei Zhang obtains a version of Theorem \ref{thmB}
without requiring this injectivity, but at the expense of requiring certain Tamagawa numbers 
be indivisible by $p$.
\item[(xi)] Many, if not all, of the local hypotheses on $\pi$ and $K$ can likely be relaxed.
For example, recent work of Y.~Liu, S.~Zhang, and W.~Zhang essentially establishes 
the identity in Proposition \ref{BDPprop} in the general Gross--Zagier set-up of 
\cite{YZZ} (including over a totally real field).
\item[(xii)] As recalled in the introduction, the analog of Theorem \ref{thmA} for CM elliptic 
curves is explained in \cite[Thms.~8.1,8.2]{Ru-CMrational} as a consequence of 
Rubin's proof of the main conjecture for CM curves, Perrin-Riou's $p$-adic Gross--Zagier formula,
and Bertand's proof of the non-degeneracy of the relevant $p$-adic height pairing. 
It is also possible to give a proof for the CM case along the lines of the proof of Theorem \ref{thmB}
by using the Main Conjecture for CM forms and the analog of Proposition \ref{BDPprop} for 
the CM case, which is just \cite[Thm.~9.5]{Ru-CMrational} or \cite[Thm.~2]{BDP2}.
\item[(xiii)] The methods employed to prove Theorem \ref{thmB} in this paper can be adapted to provide an alternative
proof of the base case of the induction argument in \cite{Z-Koly} that avoids appealing
to \cite{SU-MCGL} and so should also work for supersingular primes (see remark (v)).
This is part of forthcoming work.
\end{itemize}
\end{rmk}

\section{Theorem A follows from Theorem B}\label{AimpliesB}

Let $f$ and $A_f$ be as in Theorem \ref{thmA}.  In particular, $f$ is not a CM form. 

Consider the set
of primes $p\nmid N$ that are greater than $4$ and unramified in $M_f$.
Suppose that $f$ is not ordinary for all $\lambda\mid p$ for some such $p$. Then
the norm of $a_p(f)$, which has absolute value at most $(2p^{1/2})^{[M_f:\Q]}$ by the
Ramanujan bounds, is an integer divisible by
$p^{[M_f:\Q]}$, so $a_p(f)=0$. But, as $f$ is not a CM form, 
the set of primes with $a_p(f)=0$ has density zero \cite[\S7.2, Cor.~2]{Serre-CDT}. Thus the set of primes $p\nmid N$
such that $f$ is ordinary with respect to some $\lambda\mid p$ of $M_f$ has density $1$.

If $p$ is sufficiently large, then $\bar\rho_{f,\lambda}$ is irreducible for all $\lambda\mid p$ 
\cite[Thm.~2.1]{Ribet-irred}. If for some $\ell\mid N$ there were arbitarily large primes $p$ and primes $\lambda\mid p$ of $M_f$
such that $\bar\rho_{f,\lambda}$ were unramified at $\ell$, then, by the finiteness of the
number of newforms of weight $2$ and level dividing $N$ and by the main result
of \cite{Ribet-level},
there would be a newform $g$ of weight $2$ and level prime to $\ell$ such that $f$
and $g$ would be congruent modulo primes of arbitrarily large characteristic $p$,
in the sense that their prime-to-$Np$ coefficients would be congruent. It would
then follow that the prime-to-$Np$ coefficients of $f$ and $g$ would be the same and hence, by
multiplicity one, that $f=g$, a contradiction. Thus, for sufficiently large $p$,
$\bar\rho_{f,\lambda}$ is ramified at all primes that divide $N$. 

By the preceding observations, we may fix a $p\geq 5$, $p\nmid N$, and a $\lambda\mid p$ 
such that $A_f$ is ordinary with respect to $\lambda$ and
$\bar\rho_{f,\lambda}$ is irreducible and ramified at all primes that divide $N$.
The hypotheses that $\rank_\Z A_f(\Q)=[M_f:\Q]$ and $\Sha(A_f)[p^\infty]$ are finite imply,
by the obvious analog of Lemma \ref{rank1lemma} with $K$ replaced by $\Q$, that $\dim_L H^1_f(\Q,V)=1$.
It then follows from Nekov\'a\v{r}'s work on the parity conjecture for Selmer groups of modular
forms \cite[Thm.~12.2.3]{Ne-parity} that $\eps(f)=-1$.

We choose an imaginary quadratic field $K/\Q$ of discriminant $D$ such that
\begin{itemize}
 \item[(i)] $2$ and $p$ split in $K$ (so $D$ is odd and hypothesis (c) of Theorem \ref{thmB} holds);
\item[(ii)] if for some odd prime $\ell$ the local representation $\pi_\ell$ is the twist of the special representation by the 
unique unramified quadratic character,
then $K$ is ramified at $\ell$ but all other prime divisors of $N$ split in $K$;
\item[(iii)] if there is no $\pi_\ell$ as in (ii) but there are two odd primes $\ell_1$ and $\ell_2$ such that
$\pi_{\ell_1}$ and $\pi_{\ell_2}$ are special, then $\ell_1$ and $\ell_2$ are inert in $K$ and
all other prime divisors of $N$ split in $K$;
\item[(iv)] $L(f\otimes\chi_D,1)\neq 0$.
\end{itemize}
If (i) and (ii) hold, then $\eps(f,K) = -\eps_\ell(\pi,K) = -1$ (as $BC_{K_\grl}(\pi_\ell)$ is again the twist
of the special representation by the unique unramified quadratic character and so has root number $+1$). 
If (i) and (iii) hold, then $\eps(f,K) = -\eps_{\ell_1}(\pi,K)\eps_{\ell_2}(\pi,K) = -1$ (as $BC_{K_{\ell_i}}(\pi_{\ell_i})$
is again the special representation and so has root number $-1$).
In particular, for a $K$ satisfying (i), (ii), and (iii) we always have $\eps(f)\eps(f\otimes\chi_D) = \eps(f,K) = -1$,
so $\eps(f\otimes\chi_D) = -\eps(f) = +1$. It then follows from 
\cite[Thm.~B]{Friedberg-Hoffstein} that $K$ can also be chosen to satisfy (iv).
Conditions (i), (ii) and (iii) imply that hypotheses (b), (c), and (d) of Theorem \ref{thmB} hold.
By the work of Gross, Zagier, and Kolyvagin cited in the introduction (alternatively, one could
appeal to results of Kato \cite{Kato}) condition (iv) implies that both $A_f^D(\Q)$ and
$\Sha(A_f^D)$ are finite, from which it then follows that $\rank_\Z A_f(K)=[M_f:\Q]$ and
$\Sha(A_f/K)[p^\infty]$ is finite. By Lemma \ref{rank1lemma}, hypothesis (e)
of Theorem \ref{thmB} then
also holds.  As hypothesis (a) holds by the choice of $p$ and $\lambda$, we conclude from Theorem \ref{thmB} that $\ord_{s=1}L(f,K,s)=1$. As
$L(f,K,s)=L(f,s)L(f\otimes\chi_D,s)$ and $L(f\otimes\chi_D,1)\neq 0$,
it follows that $\ord_{s=1}L(f,s)=1$.

This completes the proof of Theorem A.

The deduction of Theorem \ref{thmC} from Theorem \ref{thmB} is similar: Hypothesis (c)
of Theorem \ref{thmC} is easily 
seen to imply that $H^1_f(\Q,V)\isoarrow H^1_f(\Qp,V)\cong\Qp$. The representation $E[p]$ must be ramified at some odd prime $\ell$ of bad reduction for $E$ (again by Ribet's level-lowering results). An appropriate imaginary quadratic field $K$ that is either inert or ramified at $\ell$ is then chosen, depending on whether $E$ has split or non-split reduction at $\ell$.

\section{A remark on the $r=0$ case}

The arguments used to deduce
Theorem \ref{thmA} from Theorem \ref{thmB} can be adapted to show that if
$A_f(\Q)$ and $\Sha(A_f)$ are finite then $L(f,1)\neq 0$. This gives
an alternate proof of a special case of the results in \cite{SU-MCGL} and \cite{Xin-thesis}
cited in the introduction.

\begin{customthm}{E}\label{thmE} Suppose $N$ is squarefree.
If there is at least one odd prime $\ell$ such that $\pi_\ell$ is the twist of the special
representation by the unique unramified quadratic character or at least two odd primes
$\ell_1$ and $\ell_2$ such that $\pi_{\ell_1}$ and $\pi_{\ell_2}$ are special,
then
\begin{equation*}
\begin{matrix} \#A_f(\Q), \ \#\Sha(A_f)< \infty \implies \ord_{s=1} L(f,s) = 0.
\end{matrix}
\end{equation*}
\end{customthm}

The argument is virtually identical to the proof that Theorem \ref{thmB} implies Theorem \ref{thmA}. The changes
involved are: the hypotheses now imply that $\eps(f)=+1$, and $K$ is chosen to satisfy (i), (ii), (iii),
and
\begin{itemize}
 \item[(iv)$'$] $\ord_{s=1}L(f\otimes\chi_D,s)=1$.
\end{itemize}
This is possible as $\eps(f\otimes\chi_D)$ will equal $-1$. We then conclude from Theorem \ref{thmB},
much as before, that $\ord_{s=1}L(f,K,s)=1$, which implies by the choice of $K$ that $L(f,1)\neq 0$.

\end{document}